\pdfoutput=1
\documentclass[11pt,a4paper]{article}
\usepackage[margin=2.6cm]{geometry}
\usepackage{amsmath,amssymb,amsthm,mathtools}
\usepackage{booktabs}
\usepackage[expansion=false]{microtype}
\usepackage{xcolor}
\usepackage[colorlinks=true,linkcolor=blue!55!black,citecolor=blue!55!black,urlcolor=blue!55!black]{hyperref}

\theoremstyle{plain}
\newtheorem{theorem}{Theorem}[section]
\newtheorem{proposition}[theorem]{Proposition}
\newtheorem{lemma}[theorem]{Lemma}
\newtheorem{corollary}[theorem]{Corollary}
\theoremstyle{definition}
\newtheorem{remark}[theorem]{Remark}
\newtheorem{question}[theorem]{Question}

\DeclarePairedDelimiter{\abs}{\lvert}{\rvert}
\newcommand{\R}{\mathbb{R}}
\newcommand{\Q}{\mathbb{Q}}
\newcommand{\N}{\mathbb{N}}
\newcommand{\dimH}{\dim_{\mathcal H}}
\newcommand{\normx}[1]{\lVert #1 \rVert}
\newcommand{\Fst}{\mathcal{F}_{s,t}}
\newcommand{\Gst}{\mathcal{G}_{s,t}}

\title{\Large Critical convergence and Hausdorff measures\\
for generalized Flint Hills series}
\author{Yuya Dan\thanks{Danlab, Faculty of Informatics, Matsuyama University.
\texttt{dan@g.matsuyama-u.ac.jp}}}
\date{\today}

\begin{document}
\maketitle

\begin{abstract}
We study the generalized Flint Hills series
\[
  \Fst(x)=\sum_{n\ge1}n^{-s}\abs{\sin(\pi nx)}^{-t}
\]
for $s>0$ and $t>1$. An explicit comparison with a series over continued-fraction
denominators yields the Hausdorff dimension $\min\{1,2t/(s+t)\}$ of its divergence set.
At each critical exponent $\tau=1+s/t>2$ we construct numbers of irrationality exponent
$\tau$ realizing both convergence and divergence; the convergent examples establish
Meiburg's conjecture in the range $t>1$. Both parts of the critical fibre have Hausdorff
dimension $2/\tau$. For $s>t$ and
\[
  h_\kappa(r)=r^{2/\tau}\bigl(\log(1/r)\bigr)^{\kappa}
\]
we prove that the divergence set has zero $h_\kappa$-measure for $\kappa<-1$ and
infinite measure for $\kappa\ge-1$. The divergent part of the critical fibre satisfies
the same law, whereas its convergent part has infinite measure for every $\kappa$. The
key estimate selects a rapidly growing subsequence of convergent denominators and gives
a double-logarithmic bound on the approximation error. The convergence of the classical
Flint Hills series $\sum_{n\ge1}(n^{3}\sin^{2}n)^{-1}$ remains undecided.

\medskip
\noindent\emph{2020 Mathematics Subject Classification:} 11J82 (primary); 11J70,
11J83, 11K50, 28A78 (secondary).

\smallskip
\noindent\emph{Keywords:} Flint Hills series, irrationality measure, continued
fractions, Hausdorff dimension, Hausdorff measure, exact approximation order.
\end{abstract}

\section{Introduction}

The Flint Hills series
\begin{equation}\label{eq:FH}
  S \;=\; \sum_{n=1}^{\infty}\frac{1}{n^{3}\sin^{2}n}
\end{equation}
is not known to converge. The obstruction is entirely Diophantine: $\abs{\sin n}$ is
small exactly when $n$ is close to an integer multiple of $\pi$, so the size of the
terms of \eqref{eq:FH} is governed by the quality of the rational approximations to
$\pi$. Alekseyev \cite[Cor.~4]{Alekseyev} proved that convergence of \eqref{eq:FH}
implies $\mu(\pi)\le5/2$ for the irrationality exponent $\mu$, and gave the sufficient
condition $\mu(\pi)<1+(u-1)/v$ for $\sum_n n^{-u}\abs{\sin n}^{-v}$
\cite[Thm.~5]{Alekseyev}, which is vacuous at $(u,v)=(3,2)$. Meiburg \cite{Meiburg}
sharpened the sufficient condition to $\mu(\pi)<1+u/v$, so that $\mu(\pi)<5/2$ does
imply convergence of \eqref{eq:FH}. The best bound available today is
$\mu(\pi)\le7.103205334137\ldots$ \cite{ZeilbergerZudilin}, and the expected value is
$\mu(\pi)=2$, which would give convergence. The gap between $2.5$ and $7.10$ is
therefore the main obstacle to this approach, and closing it by the methods that produced
the known bounds \cite{Hata,Salikhov,ZeilbergerZudilin} would require a substantial new
idea; their recorded progress has been $8.0161\to7.6063\to7.1032$ over three decades.

The purpose of this note is not to attack that gap but to describe precisely what
\emph{is} decidable about \eqref{eq:FH} and its natural family. Five things are done.

\begin{enumerate}
\item Section~\ref{sec:red} records an explicit two-sided comparison
(Proposition~\ref{prop:red}) between $\Fst(x)$ and $\sum_k q_{k+1}^t/q_k^s$. This is
classical in substance --- it is the mechanism behind \cite{Alekseyev} and
\cite{Meiburg} --- but the two-sided form with explicit constants gives both the
necessary and the sufficient criterion at once, and every statement below follows from
it. Two unconditional corollaries: $\sum_n(n^{s}\sin^2n)^{-1}$ diverges for every
$s\le2$, and converges for every $s>2\mu(\pi)-2$, hence unconditionally for
$s>12.20642$.

\item Section~\ref{sec:dim} determines the Hausdorff dimension of the divergence set
of the family; for the Flint Hills exponents the answer is $4/5$
(Theorem~\ref{thm:dim}). The divergence set is null but of positive dimension and
dense, which is a quantitative form of the statement that a size estimate for
$D_{3,2}$ cannot decide whether the particular number $1/\pi$ belongs to it.

\item Section~\ref{sec:sharp} treats the critical case $\mu(x)=1+s/t$, left open by the
criterion. A single one-parameter family of continued fractions realizes both
behaviours there (Proposition~\ref{prop:sharp}). For $u>v>1$ the member $x_0$ recovers both
nonconvergence conclusions of \cite[Thm.~3.1]{Meiburg}, whereas the members $x_r$ with
$0<r\le1$ have divergent series whose terms tend to zero; the members with $r>1$
establish \cite[Conj.~3.1]{Meiburg} in this parameter range, and yield
Corollary~\ref{cor:noiff}: convergence is not equivalent to any condition on $\mu$
alone.

\item Section~\ref{sec:fibre} sets up the two parts $C_\tau,E_\tau$ of the critical
fibre and records the reformulation on which everything after it rests: writing
$\rho_k=q_k^{\tau}\abs{x-p_k/q_k}$, the criterion of Section~\ref{sec:red} becomes
$\Fst(x)<\infty\iff\sum_k\rho_k^{-t}<\infty$ (Lemma~\ref{lem:bridge}). This places the
problem in the setting of Bugeaud's theorem on sets of exact approximation order
\cite{BugeaudExact}, which gives $\dimH C_\tau=\dimH E_\tau=2/\tau$ at once
(Remark~\ref{rem:viaexact}); the same conclusion follows from Section~\ref{sec:gauge}
without that input (Corollary~\ref{thm:fibre}).

\item Section~\ref{sec:gauge} separates them by a gauge, and locates the threshold
exactly. The key point is Lemma~\ref{lem:Dcover}: after discarding the indices that
cannot contribute to $\sum_k\rho_k^{-t}$, the surviving denominators grow doubly
exponentially, so divergence forces
$\rho_k<\Lambda_\tau^{a}(\log\log q_k)^{a}$ infinitely often with $a=(1+\varepsilon)/t$;
the whole divergence set then sits inside a single limsup set to which the
Hausdorff--Cantelli lemma applies. With Jarn\'ik's theorem on the other side this gives
the zero--infinity law of Theorem~\ref{thm:gaugeD} and, for the fibre,
$\mathcal H^{h_\kappa}(E_\tau)=0<\infty=\mathcal H^{h_\kappa}(C_\tau)$ exactly for
$\kappa<-1$ (Theorem~\ref{thm:gaugefibre}). No Hausdorff-measure refinement of
Theorem~\ref{thm:bugeaud} is needed; the recent quantitative work on exact-order sets
\cite{BakerWard} computes dimensions only.
\end{enumerate}

Section~\ref{sec:num} collects numerical evidence about $\pi$ over a finite range of its
continued fraction, and Section~\ref{sec:disc} lists what seems reachable and what does
not. Approaches to \eqref{eq:FH} that avoid irrationality measures altogether have been
proposed, e.g.\ \cite{Agama}; we do not pursue them here.

\section{Notation and preliminaries}\label{sec:prelim}

Write $\normx{y}=\min_{m\in\mathbb Z}\abs{y-m}$ for the distance from $y\in\R$ to the
nearest integer. For $s>0$, $t>0$ and $x\in\R\setminus\Q$ set
\[
  \Fst(x)=\sum_{n=1}^{\infty}\frac{1}{n^{s}\,\abs{\sin(\pi n x)}^{t}},
  \qquad
  \Gst(x)=\sum_{n=1}^{\infty}\frac{1}{n^{s}\,\normx{nx}^{t}} .
\]
Both are unchanged by $x\mapsto x+1$, so we may and do assume $x\in(0,1)$ throughout,
i.e.\ $a_0=0$ in the continued fraction expansion below. Since
$\abs{\sin(\pi y)}=\sin(\pi\normx{y})$ and $\tfrac{2}{\pi}u\le\sin u\le u$ on
$[0,\pi/2]$, we have $2\normx{y}\le\abs{\sin(\pi y)}\le\pi\normx{y}$ for all $y$, whence
\begin{equation}\label{eq:FG}
  \pi^{-t}\,\Gst(x)\ \le\ \Fst(x)\ \le\ 2^{-t}\,\Gst(x).
\end{equation}
In particular $\Fst(x)$ and $\Gst(x)$ converge or diverge together. Taking $x=1/\pi$
gives $\sin(\pi n x)=\sin n$, so the Flint Hills series \eqref{eq:FH} is
$S=\mathcal F_{3,2}(1/\pi)$.

Let $x=[0;a_1,a_2,\dots]$ be the continued fraction expansion of an irrational
$x\in(0,1)$, with convergents $p_k/q_k$, so that $q_{-1}=0$, $q_0=1$, $p_0=0$ and
$q_{k+1}=a_{k+1}q_k+q_{k-1}$. We use the following standard facts
(see \cite[Ch.~I--II]{Khinchin}):
\begin{align}
  &\frac{1}{q_{k+1}+q_k}\;<\;\normx{q_kx}=\abs{q_kx-p_k}\;<\;\frac{1}{q_{k+1}}
    \qquad(k\ge1), \label{eq:cf1}\\[2pt]
  &0<n<q_{k+1}\ \Longrightarrow\ \normx{nx}\ \ge\ \normx{q_kx}
    \qquad(k\ge0), \label{eq:cf2}\\[2pt]
  &q_{k+1}\ge q_k+q_{k-1},\quad\text{hence}\quad q_k\ \ge\ \varphi^{\,k-1},\qquad
    \varphi=\tfrac{1+\sqrt5}{2}. \label{eq:cf3}
\end{align}

\begin{remark}\label{rem:k0}
The index restriction in \eqref{eq:cf1} is needed: for $k=0$ one has
$\normx{q_0x}=\normx{x}$ and $\abs{q_0x-p_0}=x$, which differ when $x>1/2$, i.e.\ when
$a_1=1$. If $a_1\ge2$ then $x<1/2$ and \eqref{eq:cf1} holds at $k=0$ as well, since
$\normx{x}=x>1/(a_1+1)=1/(q_1+q_0)$. Wherever \eqref{eq:cf1} is invoked at $k=0$ below,
$a_1\ge2$ holds, so this is the only case that occurs; see the proof of
Proposition~\ref{prop:red}.
\end{remark}

The \emph{irrationality exponent} of $x$ is
\[
  \mu(x)=\inf\Bigl\{\tau>0:\ \bigl|x-\tfrac pq\bigr|<q^{-\tau}
  \text{ for at most finitely many } \tfrac pq\in\Q\Bigr\},
\]
and, because the best rational approximations to $x$ are its convergents,
\eqref{eq:cf1} gives the continued fraction formula
\begin{equation}\label{eq:mu}
  \mu(x)\;=\;1+\limsup_{k\to\infty}\frac{\log q_{k+1}}{\log q_k}
  \;=\;2+\limsup_{k\to\infty}\frac{\log a_{k+1}}{\log q_k},
\end{equation}
the second equality because $q_k a_{k+1}\le q_{k+1}\le 2q_ka_{k+1}$.
By Khinchin's theorem $\mu(x)=2$ for Lebesgue-almost every $x$, and $\mu(x)\ge2$ for
every irrational $x$.

Since the Flint Hills series involves $1/\pi$ while the literature quotes $\mu(\pi)$,
we record:

\begin{lemma}\label{lem:inv}
If $x>0$ is irrational then $\mu(1/x)=\mu(x)$.
\end{lemma}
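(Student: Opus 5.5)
We compare approximations to $x$ and to $1/x$ directly from the definition of $\mu$, without continued fractions. The key observation is that $p/q\mapsto q/p$ carries approximations of $x$ to approximations of $1/x$ with denominators of comparable size. Since the statement is symmetric in $x$ and $1/x$, it suffices to prove $\mu(1/x)\le\mu(x)$.

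Fix $\tau>\mu(x)$. By the definition of $\mu(x)$, only finitely many rationals $p/q$ satisfy $\abs{x-p/q}<q^{-\tau}$. We want to show that for every $\tau'>\tau$, only finitely many rationals $q/p$ satisfy $\abs{1/x-q/p}<p^{-\tau'}$; letting $\tau\downarrow\mu(x)$ and $\tau'\downarrow\tau$ then gives $\mu(1/x)\le\mu(x)$.

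Suppose $\abs{1/x-q/p}<p^{-\tau'}$ with $p,q>0$. Then $q/p\to1/x$ along any infinite family of such fractions, so $p$ and $q$ are unbounded. Indeed, there are only finitely many fractions with bounded denominator in a bounded window, and $1/x$ is irrational, so it cannot be attained. In particular $q/p\in[1/(2x),2/x]$ for all but finitely many of them. Now write
\[
  \Bigl|x-\frac pq\Bigr|=\frac{xp}{q}\Bigl|\frac1x-\frac qp\Bigr|
  <\frac{xp}{q}\,p^{-\tau'}\le 2x^2\,p^{-\tau'},
\]
and use $p\ge xq/2$ to bound this by $C_x\,q^{-\tau'}$. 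Since $\tau'>\tau$, we have $C_xq^{-\tau'}<q^{-\tau}$ once $q$ is large. Hence, apart from finitely many exceptions, each such $q/p$ yields a rational $p/q$ with $\abs{x-p/q}<q^{-\tau}$.

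The map $q/p\mapsto p/q$ is injective, and there are only finitely many rationals of the latter kind. So only finitely many $q/p$ satisfy the inequality, which gives $\mu(1/x)\le\tau'$.

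One technical point deserves care: rationals not written in lowest terms, and the sign of the denominator in the definition of $\mu$. Both are harmless. Replacing a fraction by its reduced form only shrinks the denominator, which makes the inequality easier to satisfy, and negative fractions are eventually far from $x>0$. So we may restrict to reduced $p/q$ with $p,q>0$ throughout, and no real obstacle is expected.
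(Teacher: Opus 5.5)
Your proof is correct and is essentially the paper's argument: the same identity $\abs{x-p/q}=\frac{xp}{q}\abs{1/x-q/p}$, the comparability $p\asymp q$ along the relevant fractions, and the reduction by symmetry to a single inequality. The only difference is direction: you prove $\mu(1/x)\le\mu(x)$ by pulling approximations of $1/x$ back to $x$ and invoking finiteness, whereas the paper proves $\mu(1/x)\ge\mu(x)$ by pushing an infinite family of approximations of $x$ forward to $1/x$, which is the contrapositive form of the same step.
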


\begin{proof}
Let $0<\tau<\mu(x)$ and take infinitely many $p/q$ in lowest terms, $q\ge1$, with
$\abs{x-p/q}<q^{-\tau}$; necessarily $p\ge1$ for all large $q$ and $p/q\to x$. From
\[
  \Bigl|\frac1x-\frac qp\Bigr| \;=\; \frac{\abs{p-qx}}{p\,x}
  \;=\;\frac{q}{p\,x}\,\Bigl|x-\frac pq\Bigr|
\]
and $q/p\to 1/x$ we get $\abs{1/x-q/p}\le C_x\,q^{-\tau}$ along the same sequence, while
$p\le C_x' q$. Hence $\abs{1/x-q/p}\le C_x''\,p^{-\tau}\le p^{-\tau+\varepsilon}$ for
every $\varepsilon>0$ and all large $p$, and the fractions $q/p$ are pairwise distinct.
Therefore $\mu(1/x)\ge\tau-\varepsilon$ for all such $\tau,\varepsilon$, i.e.
$\mu(1/x)\ge\mu(x)$. Applying this to $1/x$ gives the reverse inequality.
\end{proof}

\section{The reduction to continued fraction denominators}\label{sec:red}

\begin{proposition}\label{prop:red}
Let $s>0$, $t>1$, let $x\in(0,1)$ be irrational with convergent denominators
$(q_k)_{k\ge0}$, and put
\[
  \Sigma_{s,t}(x)\;=\;\sum_{k\ge0}\frac{q_{k+1}^{\,t}}{q_k^{\,s}} .
\]
Then
\begin{equation}\label{eq:red}
  \sum_{k\ge1}\frac{q_{k+1}^{\,t}}{q_k^{\,s}}
  \;\le\;\Gst(x)\;\le\; C(s,t)\,\Sigma_{s,t}(x),
  \qquad
  C(s,t)=\frac{2^{s+t}\bigl(2+2^{t}\zeta(t)\bigr)}{1-2^{-s}} .
\end{equation}
In particular, by \eqref{eq:FG},
\[
  \Fst(x)<\infty \iff \Gst(x)<\infty \iff \Sigma_{s,t}(x)<\infty .
\]
\end{proposition}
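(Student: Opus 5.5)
The lower bound is the easy half. Keep only the terms $n=q_k$, $k\ge1$, in $\Gst(x)$. These are distinct because $q_{k+1}>q_k$ for $k\ge1$. By \eqref{eq:cf1} we have $\normx{q_kx}<1/q_{k+1}$, so $q_k^{-s}\normx{q_kx}^{-t}>q_{k+1}^t/q_k^s$. Summing over $k\ge1$ gives the left inequality of \eqref{eq:red}.

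For the upper bound I partition $\N$ into dyadic-type blocks adapted to the convergents, $B_k=\{n: q_k\le n<q_{k+1}\}$ for $k\ge0$. These cover all $n\ge1$ since $q_0=1$; if $a_1=1$ then $q_1=q_0$ and $B_0$ is empty. I aim to show
\[
\sum_{n\in B_k}\frac{1}{n^s\normx{nx}^t}\le C'\,\frac{q_{k+1}^t}{q_k^s}.
\]
The key tool is a spacing argument. For $0<m<q_{k+1}$, \eqref{eq:cf2} gives $\normx{mx}\ge\normx{q_kx}>1/(q_{k+1}+q_k)\ge 1/(2q_{k+1})$. Here \eqref{eq:cf1} at $k=0$ is only needed when $B_0\ne\emptyset$, i.e.\ $a_1\ge2$, which is exactly Remark~\ref{rem:k0}. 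Hence the points $nx \bmod 1$ with $n$ in any window of fewer than $q_{k+1}$ consecutive integers are pairwise at distance at least $1/(2q_{k+1})$, since $\normx{nx-n'x}=\normx{(n-n')x}$. Consequently, among them at most two have $\normx{nx}<1/(2q_{k+1})$, and for each $j\ge1$ at most two have $\normx{nx}\in[j/(2q_{k+1}),(j+1)/(2q_{k+1}))$. Thus after ordering, the $j$-th smallest value of $\normx{nx}$ is at least roughly $\lceil j/2\rceil/(2q_{k+1})$, and the value at $j=1,2$ is at least $1/(2q_{k+1})$ by the bound above. The sum of $\normx{nx}^{-t}$ over the window is therefore at most $(2q_{k+1})^t\bigl(2+2\sum_{j\ge1}j^{-t}\bigr)$, which is finite since $t>1$. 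I expect the exact bookkeeping to be what produces the factor $2+2^t\zeta(t)$; the $2^t$ there suggests the authors used spacing $1/q_{k+1}$ halved once more, and I would adjust the constants to match.

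Then $B_k$ has fewer than $q_{k+1}$ elements and $n\ge q_k$ on it, so the block sum is at most $q_k^{-s}\,2^t q_{k+1}^t(2+2^t\zeta(t))$, up to the power of $2$. That alone would give $C\,\Sigma_{s,t}$ without the factor $2^s/(1-2^{-s})$. The presence of that factor suggests the author splits each $B_k$ further into subwindows $[2^jq_k,2^{j+1}q_k)$, or handles $n$ up to $2q_{k+1}$ and uses geometric decay of $n^{-s}$. Since my cruder argument already yields a constant no larger than $C(s,t)$ (note $2^s/(1-2^{-s})\ge1$), the inequality as stated follows. The final equivalences follow from \eqref{eq:red} and \eqref{eq:FG}, because $\Sigma_{s,t}$ differs from the left-hand sum only by the single finite term $k=0$.

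The main obstacle is the spacing count: checking that $\normx{(n-n')x}\ge\normx{q_kx}$ applies to all differences within $B_k$ (true since $0<\abs{n-n'}<q_{k+1}$), and handling the edge case $k=0$ via Remark~\ref{rem:k0}.
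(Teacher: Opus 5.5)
Your proof is correct. It differs from the paper's in how the upper bound is decomposed.

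The paper cuts $\N$ into dyadic blocks $[2^i,2^{i+1})$. It assigns to each block the largest index $k(i)$ with $q_{k(i)}\le 2^{i+1}$, so that $q_{k(i)+1}>n$ on the block and the spacing argument runs with $\delta=1/(2q_{k(i)+1})$. It then needs a second step: it groups the $i$ sharing a value of $k(i)$ and sums the geometric series $\sum 2^{-is}$ over each group, using $2^{i+1}\ge q_k$. That regrouping is the source of the factor $2^s/(1-2^{-s})$. The $2^t$ in front of $\zeta(t)$ is not a halved spacing. It comes from a cruder count, $\normx{n_jx}\ge (j-2)\delta/2$, obtained by treating the arc of length $2r$ around $0$ as a whole.

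Your blocks $[q_k,q_{k+1})$ are already adapted to the convergents. The separation $\normx{(n-n')x}\ge\normx{q_kx}$ and the floor $\normx{nx}\ge\normx{q_kx}$ therefore hold on the whole block at once, and the regrouping disappears. Bounding $n^{-s}$ by $q_k^{-s}$ on a block that may be very long costs nothing. The single term $n=q_k$ already exceeds $q_{k+1}^t/q_k^s$, so each block is dominated by its spike.

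The payoff is a constant independent of $s$. Every $\normx{nx}$ with $n\in B_k$ exceeds $1/(q_{k+1}+q_k)\ge\delta$, so the innermost shell $[0,\delta)$ is empty. Hence the $m$-th smallest value is at least $\lceil m/2\rceil\delta$ exactly, with no ``roughly'' needed. This gives $\Gst(x)\le 2^{t+1}\zeta(t)\,\Sigma_{s,t}(x)$, whereas the paper's $C(s,t)$ tends to infinity as $s\to0$.

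The dyadic scheme is the more standard device and keeps the weight $n^{-s}$ at its true size across long gaps $q_k\ll n<q_{k+1}$. For $t>1$ that precision is not needed. You can drop the hedges (``up to the power of $2$'', ``adjust the constants'') and the guesses about the paper's bookkeeping, since your explicit constant is already below $C(s,t)$.
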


\begin{proof}
\emph{Lower bound.} By \eqref{eq:cf1} the single term $n=q_k$, $k\ge1$, contributes
\[
  \frac{1}{q_k^{\,s}\normx{q_kx}^{t}}\;>\;\frac{q_{k+1}^{\,t}}{q_k^{\,s}} ,
\]
and for $k\ge1$ the integers $q_k$ are pairwise distinct, since
$q_{k+1}=a_{k+1}q_k+q_{k-1}>q_k$ for $k\ge1$. Summing over $k\ge1$ gives the left
inequality.

\emph{Upper bound.} Decompose $\N$ into dyadic blocks
$B_i=\{n\in\N:\,2^{i}\le n<2^{i+1}\}$, $i\ge0$, and let $k=k(i)$ be the largest index
with $q_k\le 2^{i+1}$; thus $q_{k+1}>2^{i+1}>n$ for every $n\in B_i$. Note that
$k(i)=0$ forces $q_1>2^{i+1}\ge2$, hence $a_1\ge3$, so \eqref{eq:cf1} is available for
$k=k(i)$ in every case, by Remark~\ref{rem:k0}. If $n\ne n'$ lie in $B_i$ then
$0<\abs{n-n'}<2^{i}<q_{k+1}$, so by \eqref{eq:cf2} and \eqref{eq:cf1},
\[
  \normx{(n-n')x}\ \ge\ \normx{q_kx}\ >\ \frac{1}{q_{k+1}+q_k}\ >\ \frac{1}{2q_{k+1}}
  \;=:\;\delta .
\]
Hence the points $\{nx\}$, $n\in B_i$, are $\delta$-separated in $\R/\mathbb Z$.
Consequently, for any $r>0$ the number of $n\in B_i$ with $\normx{nx}<r$ is at most
$2r/\delta+1$, because such points lie in an arc of length $2r$ around $0$. Order
$B_i=\{n_1,n_2,\dots\}$ so that $\normx{n_1x}\le\normx{n_2x}\le\cdots$. Taking
$r=(j-2)\delta/2$ for $j\ge3$ shows $\normx{n_jx}\ge (j-2)\delta/2$, while
$\normx{n_1x},\normx{n_2x}\ge\normx{q_kx}>\delta$ by \eqref{eq:cf2}. Therefore
\[
  \sum_{n\in B_i}\frac{1}{\normx{nx}^{t}}
  \;\le\;\frac{2}{\delta^{t}}+\sum_{j\ge3}\frac{2^{t}}{(j-2)^{t}\delta^{t}}
  \;=\;\frac{2+2^{t}\zeta(t)}{\delta^{t}}
  \;=\;2^{t}\bigl(2+2^{t}\zeta(t)\bigr)\,q_{k(i)+1}^{\,t},
\]
where $t>1$ was used for $\zeta(t)<\infty$. Since $n\ge2^{i}$ on $B_i$,
\[
  \Gst(x)\;\le\;\sum_{i\ge0}\frac{1}{2^{is}}\sum_{n\in B_i}\frac{1}{\normx{nx}^{t}}
  \;\le\;2^{t}\bigl(2+2^{t}\zeta(t)\bigr)\sum_{i\ge0}\frac{q_{k(i)+1}^{\,t}}{2^{is}} .
\]
Group the indices $i$ according to the value $k=k(i)$. Since $k(i)$ is non-decreasing,
$\{i:k(i)=k\}$ is a (possibly empty) interval of integers, and each of its elements
satisfies $2^{i+1}\ge q_k$, i.e.\ $2^{-is}\le (2/q_k)^{s}$; summing the geometric series
over that interval gives
\[
  \sum_{i:\,k(i)=k}\frac{q_{k+1}^{\,t}}{2^{is}}
  \;\le\;\frac{2^{s}}{1-2^{-s}}\cdot\frac{q_{k+1}^{\,t}}{q_k^{\,s}} .
\]
Summing over $k\ge0$ yields the right-hand inequality of \eqref{eq:red}.
\end{proof}

\begin{remark}[the cluster at a convergent denominator]\label{rem:cluster}
The lower bound in \eqref{eq:red} uses only the terms $n=q_k$. Each such $n$ in fact
carries a cluster, and in the $\Gst$ normalization the statement is an identity: for
$k\ge1$ put $\delta_k=q_kx-p_k$, so that $\abs{\delta_k}=\normx{q_kx}$ by
\eqref{eq:cf1} and Remark~\ref{rem:k0}. For every integer
$1\le M\le\frac{1}{2\abs{\delta_k}}$ one has $\normx{mq_kx}=m\abs{\delta_k}$ for
$1\le m\le M$, whence
\begin{equation}\label{eq:clusterG}
  \sum_{m=1}^{M}\frac{1}{(mq_k)^{s}\normx{mq_kx}^{t}}
  \;=\;\frac{1}{q_k^{\,s}\normx{q_kx}^{t}}\sum_{m=1}^{M}\frac{1}{m^{s+t}} .
\end{equation}
Since $\abs{\delta_k}<1/q_{k+1}$ by \eqref{eq:cf1}, one may take $M=\lfloor
q_{k+1}/2\rfloor$; the factor $\sum_{m\le M}m^{-s-t}$ therefore approaches
$\zeta(s+t)$ as $q_{k+1}\to\infty$. In the $\Fst$ normalization the corresponding
statement is only asymptotic, and the relevant phase is $\pi\delta_k$, not $\delta_k$:
put
\[
  T_x(n)=\frac{1}{n^{3}\abs{\sin(\pi nx)}^{2}},\qquad
  \theta_k=\pi\bigl(q_kx-p_k\bigr)=\pi\delta_k ,
\]
so that $\sin(\pi mq_kx)=\pm\sin(m\theta_k)$. Then, for $\abs{M\theta_k}\le c<\pi/2$,
\begin{equation}\label{eq:clusterF}
  \frac{T_x(mq_k)}{T_x(q_k)}
  \;=\;m^{-5}\Bigl(\frac{\sin\theta_k/\theta_k}{\sin(m\theta_k)/(m\theta_k)}\Bigr)^{2}
  \;=\;m^{-5}\bigl(1+O_c(m^{2}\theta_k^{2})\bigr)
  \qquad(1\le m\le M).
\end{equation}
For $x=1/\pi$ this reads $\theta_k=q_k-\pi p_k$, and $T_{1/\pi}(n)=(n^{3}\sin^{2}n)^{-1}$
is the Flint Hills summand. For the Flint Hills exponents \eqref{eq:clusterF} predicts a
factor close to $\zeta(5)=1.0369278\ldots$ over such a finite range; see
Table~\ref{tab:cluster}. It says nothing by itself about $\sum_{m\ge1}$, for which
$m\abs{\theta_k}$ eventually exceeds $\pi/2$; the identity \eqref{eq:clusterG} in the
$\normx{\cdot}$ normalization needs no such correction.
\end{remark}

\begin{corollary}\label{cor:crit}
Let $s>0$, $t>1$, and let $x$ be irrational.
\begin{enumerate}
\item[(a)] If $s>t\bigl(\mu(x)-1\bigr)$, equivalently $\mu(x)<1+s/t$, then
$\Fst(x)<\infty$.
\item[(b)] If $s<t\bigl(\mu(x)-1\bigr)$ then $\Fst(x)=\infty$; indeed the terms of
$\Fst(x)$ do not tend to $0$ along the subsequence $n=q_k$.
\item[(c)] If $s\le t$ then $\Fst(x)=\infty$ for \emph{every} irrational $x$.
\end{enumerate}
Part~(a) is Theorem~2.5 of \cite{Meiburg}; the contrapositive of (b) at $(s,t)=(3,2)$ is
Corollary~4 of \cite{Alekseyev}. Parts (b) and (c) use only the left inequality of
\eqref{eq:red} and therefore hold for every $t>0$.
\end{corollary}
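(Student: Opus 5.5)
All three parts follow from Proposition~\ref{prop:red} combined with the continued fraction formula \eqref{eq:mu} for $\mu(x)$. By \eqref{eq:FG} it suffices throughout to work with $\Gst$ or $\Sigma_{s,t}$. We may assume $x\in(0,1)$, since both series are $1$-periodic.

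\emph{Part (a).} Suppose $\mu(x)<1+s/t$, and fix $\tau$ with $\mu(x)-1<\tau<s/t$. By \eqref{eq:mu}, $\limsup_k \log q_{k+1}/\log q_k=\mu(x)-1<\tau$. Hence $q_{k+1}\le q_k^{\tau}$ for all $k\ge k_0$, where $k_0$ is chosen so that also $q_k>1$. For such $k$,
\[
  \frac{q_{k+1}^{t}}{q_k^{s}}\le q_k^{-(s-t\tau)}\le \varphi^{-(k-1)(s-t\tau)}
\]
by \eqref{eq:cf3}, and $s-t\tau>0$. So the tail of $\Sigma_{s,t}(x)$ is dominated by a convergent geometric series. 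The finitely many remaining terms are finite, so $\Sigma_{s,t}(x)<\infty$, and the right inequality of \eqref{eq:red} gives $\Fst(x)<\infty$. This is the only place where $t>1$ is used, through $C(s,t)$.

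\emph{Part (b).} Suppose $s<t(\mu(x)-1)$, and pick $\tau$ with $s/t<\tau<\mu(x)-1$. By \eqref{eq:mu} there are infinitely many $k\ge1$ with $q_{k+1}\ge q_k^{\tau}$. For these $k$, the single term $n=q_k$ satisfies, by \eqref{eq:cf1},
\[
  \frac{1}{q_k^{s}\normx{q_kx}^{t}}>\frac{q_{k+1}^{t}}{q_k^{s}}\ge q_k^{t\tau-s}\to\infty .
\]
By \eqref{eq:FG} the corresponding term of $\Fst$ is at least $\pi^{-t}$ times this, so it does not tend to $0$ and $\Fst(x)=\infty$. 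This argument uses only \eqref{eq:cf1} at $k\ge1$, which is the mechanism of the left inequality of \eqref{eq:red}, and it is valid for every $t>0$.

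\emph{Part (c).} Suppose $s\le t$. If $s<t$, then $t(\mu(x)-1)\ge t>s$ because $\mu(x)\ge2$, and part (b) applies. The boundary case $s=t$ is the only genuine obstacle, since (b) degenerates there. It is handled directly from the left inequality of \eqref{eq:red}: since $q_{k+1}>q_k$ for $k\ge1$,
\[
  \frac{q_{k+1}^{t}}{q_k^{s}}\ge\Bigl(\frac{q_{k+1}}{q_k}\Bigr)^{t}\ge1
\]
for every $k\ge1$. The sum $\sum_{k\ge1}q_{k+1}^t/q_k^s$ therefore diverges, hence so does $\Gst(x)$, and by \eqref{eq:FG} so does $\Fst(x)$. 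In fact this argument covers all $s\le t$ at once, since $q_{k+1}^t/q_k^s\ge q_{k+1}^t/q_k^t\ge 1$ whenever $s\le t$. Again only the lower bound is used, so (c) holds for all $t>0$.
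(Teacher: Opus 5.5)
Your proof is correct and follows the paper's argument step for step. Part (a) uses the eventual bound $q_{k+1}\le q_k^{\tau}$ from \eqref{eq:mu}, the Fibonacci bound \eqref{eq:cf3} and the right inequality of \eqref{eq:red}; part (b) uses the single term $n=q_k$ via \eqref{eq:cf1}; and part (c) uses $q_{k+1}^{t}/q_k^{s}\ge 1$ for all $k\ge1$. The only difference is cosmetic: in (c) you first route $s<t$ through (b), whereas the paper applies the elementary bound to all $s\le t$ directly, as you also observe at the end.
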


\begin{proof}
(a) Write $\beta=\mu(x)-1$ and choose $\varepsilon>0$ with $s>t(\beta+\varepsilon)$.
By \eqref{eq:mu} there is $k_0$ with $q_{k+1}\le q_k^{\beta+\varepsilon}$ for
$k\ge k_0$, whence $q_{k+1}^{t}/q_k^{s}\le q_k^{-\eta}$ with
$\eta=s-t(\beta+\varepsilon)>0$. By \eqref{eq:cf3},
$\sum_{k\ge k_0}q_k^{-\eta}\le\sum_{k}\varphi^{-(k-1)\eta}<\infty$, so
$\Sigma_{s,t}(x)<\infty$ and Proposition~\ref{prop:red} applies.

(b) By \eqref{eq:mu} there are infinitely many $k$ with
$\log q_{k+1}/\log q_k>s/t$, i.e.\ $q_{k+1}^{t}/q_k^{s}>1$; by \eqref{eq:cf1} the
term of $\Gst(x)$ at $n=q_k$ exceeds $q_{k+1}^{t}/q_k^{s}$. Choosing the $k$'s so that
$\log q_{k+1}/\log q_k\ge s/t+\epsilon_0$ for a fixed $\epsilon_0>0$ makes these terms
tend to $\infty$.

(c) Here $q_{k+1}^{t}/q_k^{s}\ge q_k^{\,t-s}\ge1$ for every $k$, so the terms of
$\Sigma_{s,t}(x)$ do not tend to $0$.
\end{proof}

Specializing to $x=1/\pi$ and using Lemma~\ref{lem:inv}:

\begin{corollary}\label{cor:pi}
Let $s>0$ and $S_s=\sum_{n\ge1}\bigl(n^{s}\sin^{2}n\bigr)^{-1}$, so that $S_3=S$ is the
Flint Hills series.
\begin{enumerate}
\item[(a)] $S_s<\infty$ whenever $s>2\mu(\pi)-2$. Since $\mu(\pi)\le7.103205334137\ldots$
\cite{ZeilbergerZudilin}, $S_s<\infty$ unconditionally for every $s>12.20642$; in
particular $\sum_n(n^{13}\sin^2n)^{-1}<\infty$.
\item[(b)] $S_s=\infty$ whenever $s<2\mu(\pi)-2$, and $S_s=\infty$ unconditionally for
every $s\le2$.
\item[(c)] $\mu(\pi)<5/2$ implies $S<\infty$; $\mu(\pi)>5/2$ implies $S=\infty$.
\end{enumerate}
\end{corollary}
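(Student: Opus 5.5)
The plan is to specialize Corollary~\ref{cor:crit} to $x=1/\pi$ with $t=2$ and then use Lemma~\ref{lem:inv} to replace $\mu(1/\pi)$ by $\mu(\pi)$. First I will verify the identification. By the discussion following \eqref{eq:FG}, $\sin(\pi n\cdot\tfrac1\pi)=\sin n$, so $S_s=\mathcal F_{s,2}(1/\pi)$. Since $1/\pi$ is irrational and $1/\pi\in(0,1)$, Proposition~\ref{prop:red} and Corollary~\ref{cor:crit} apply with $t=2>1$. Lemma~\ref{lem:inv} gives $\mu(1/\pi)=\mu(\pi)$, so the critical quantity $t(\mu(x)-1)$ becomes $2\mu(\pi)-2$.

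For (a), Corollary~\ref{cor:crit}(a) yields $S_s<\infty$ whenever $s>2(\mu(\pi)-1)$. Inserting the bound $\mu(\pi)\le 7.103205334137\ldots$ from \cite{ZeilbergerZudilin} gives $2\mu(\pi)-2\le 12.20641066\ldots<12.20642$. Hence every $s>12.20642$ exceeds $2\mu(\pi)-2$, and the case $s=13$ is one instance of this.

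For (b), the first claim is Corollary~\ref{cor:crit}(b) applied with the same substitution. The unconditional claim for $s\le 2=t$ is Corollary~\ref{cor:crit}(c). It also follows from the first claim, since $\mu(\pi)\ge2$ gives $2\mu(\pi)-2\ge2$, but that route fails in the boundary case $\mu(\pi)=2$, $s=2$. So I will cite part (c) directly.

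For (c), I take $s=3$. The condition $\mu(\pi)<5/2$ is equivalent to $3>2\mu(\pi)-2$, so (a) gives $S<\infty$. The condition $\mu(\pi)>5/2$ is equivalent to $3<2\mu(\pi)-2$, so (b) gives $S=\infty$.

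There is no real obstacle here. The only points needing care are the numerical rounding in (a), namely that $12.20641066\ldots<12.20642$, and the choice of Corollary~\ref{cor:crit}(c) rather than (b) for the unconditional divergence at $s\le2$.
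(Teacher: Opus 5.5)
Your proposal is correct and matches the paper, which obtains the corollary simply by specializing Corollary~\ref{cor:crit} to $x=1/\pi$, $t=2$ and using Lemma~\ref{lem:inv} to replace $\mu(1/\pi)$ by $\mu(\pi)$. Your two checks are also right: citing part~(c) of Corollary~\ref{cor:crit} covers the boundary case $s=2$, and $2\cdot 7.103205334137\ldots-2=12.2064106\ldots<12.20642$.
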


Thus the Flint Hills exponent $s=3$ is bracketed by the unconditionally divergent range
$s\le2$ and the unconditionally convergent range $s>12.20642$, and each improvement of
the upper bound for $\mu(\pi)$ lowers the latter threshold; reaching $s=3$ requires
$\mu(\pi)\le5/2$. Nothing in Corollary~\ref{cor:pi} decides the case $s=3$.

\section{The size of the divergence set}\label{sec:dim}

We now let $x$ vary. Recall the Jarn\'ik--Besicovitch theorem
(\cite[\S1.3]{Bugeaud}, \cite[Ch.~10]{Falconer}): for $\tau\ge2$,
\begin{equation}\label{eq:JB}
  \dimH\{x\in\R:\ \mu(x)\ge\tau\}\;=\;\frac{2}{\tau} .
\end{equation}
(The theorem is usually stated for
$K(\tau)=\{x:\abs{x-p/q}<q^{-\tau}\text{ for infinitely many }p/q\}$, whose dimension
is $2/\tau$; since $K(\tau)\subseteq\{\mu\ge\tau\}\subseteq
\bigcap_{0<\varepsilon<\tau-2}K(\tau-\varepsilon)$, the two sets have the same
dimension, which gives \eqref{eq:JB}.)

\begin{theorem}\label{thm:dim}
Let $t>1$, $s>0$ and
\[
  D_{s,t}\;=\;\bigl\{x\in\R\setminus\Q:\ \Fst(x)=+\infty\bigr\}.
\]
Then
\[
  \dimH D_{s,t}\;=\;\min\Bigl\{1,\;\frac{2t}{s+t}\Bigr\} .
\]
Moreover $D_{s,t}=\R\setminus\Q$ when $s\le t$, while for $s>t$ the set $D_{s,t}$ is
dense, of Lebesgue measure zero, and of Hausdorff dimension $2t/(s+t)\in(0,1)$.
In particular, for the Flint Hills exponents,
\[
  \dimH D_{3,2}\;=\;\frac45 .
\]
\end{theorem}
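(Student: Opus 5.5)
The proof reduces everything to the Jarn\'ik--Besicovitch theorem \eqref{eq:JB} through Corollary~\ref{cor:crit}. We split into the cases $s\le t$ and $s>t$.

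\textbf{Case $s\le t$.} Corollary~\ref{cor:crit}(c) gives $D_{s,t}=\R\setminus\Q$ directly, so $\dimH D_{s,t}=1=\min\{1,2t/(s+t)\}$, since $2t/(s+t)\ge1$ exactly when $s\le t$.

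\textbf{Case $s>t$.} Put $\tau=1+s/t>2$, so that $2/\tau=2t/(s+t)\in(0,1)$. Corollary~\ref{cor:crit}(a) and (b) give the sandwich
\[
  \{x\notin\Q:\ \mu(x)>\tau\}\ \subseteq\ D_{s,t}\ \subseteq\ \{x:\ \mu(x)\ge\tau\}.
\]
The upper inclusion and \eqref{eq:JB} give $\dimH D_{s,t}\le 2/\tau$. For the lower bound, note that for every $\varepsilon>0$,
\[
  \{\mu\ge\tau+\varepsilon\}\subseteq\{\mu>\tau\}\subseteq D_{s,t}.
\]
Hence $\dimH D_{s,t}\ge 2/(\tau+\varepsilon)$, and letting $\varepsilon\to0$ gives equality. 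Rationals have $\mu=1$ under the infimum definition, or are simply excluded, so they do not affect either inclusion.

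The remaining qualitative claims are routine.
\begin{itemize}
\item \emph{Lebesgue null.} $D_{s,t}\subseteq\{\mu\ge\tau\}$ has Hausdorff dimension $2/\tau<1$, so it has Lebesgue measure zero. This is also consistent with Khinchin's theorem.
\item \emph{Density.} Fix an interval $I$. Choose a cylinder $[0;a_1,\dots,a_m]$ contained in $I-\lfloor\cdot\rfloor$ (using $1$-periodicity). Extend the partial quotients by choosing $a_{k+1}\ge q_k^{k}$ for all $k>m$. By \eqref{eq:mu} the resulting $x$ has $\mu(x)=\infty>\tau$, so $x\in D_{s,t}\cap I$. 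Alternatively, observe that $\{\mu>\tau\}$ has positive dimension and is invariant under $x\mapsto(ax+b)/(cx+d)$ for rational maps (the argument of Lemma~\ref{lem:inv}), and use that orbits are dense.
\end{itemize}
The Flint Hills value is the special case $s=3$, $t=2$: here $\tau=5/2$ and $2/\tau=4/5$.

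I do not expect a real obstacle. The only point needing care is that the critical fibre $\{\mu=\tau\}$ is left undecided by Corollary~\ref{cor:crit}. This is handled because the dimension lower bound uses only the strict set $\{\mu>\tau\}$, approached through $\{\mu\ge\tau+\varepsilon\}$, while the upper bound uses the closed condition $\{\mu\ge\tau\}$. Both sets have dimension $2/\tau$ by \eqref{eq:JB} and continuity of $\tau\mapsto 2/\tau$.
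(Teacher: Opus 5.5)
Your proof is correct and follows the paper's argument: Corollary~\ref{cor:crit}(c) handles $s\le t$, and for $s>t$ you combine the sandwich $\{\mu>\tau\}\subseteq D_{s,t}\subseteq\{\mu\ge\tau\}$ with \eqref{eq:JB} on both sides; your lower bound via $\{\mu\ge\tau+\varepsilon\}$ is the paper's union over $\{\mu\ge\tau_c+1/m\}$. The remaining differences are cosmetic: you obtain Lebesgue-nullity from $\dimH D_{s,t}<1$ instead of Khinchin's theorem, and your density argument, which builds numbers with $\mu=\infty$ inside each cylinder, is the paper's observation that $D_{s,t}$ contains every Liouville number, made explicit.
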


\begin{proof}
If $s\le t$ then $D_{s,t}=\R\setminus\Q$ by Corollary~\ref{cor:crit}(c), so
$\dimH D_{s,t}=1$, in agreement with $2t/(s+t)\ge1$.

Let $s>t$ and put $\tau_c=1+s/t>2$. By Corollary~\ref{cor:crit}(a),(b),
\begin{equation}\label{eq:sandwich}
  \{x\in\R\setminus\Q:\ \mu(x)>\tau_c\}\ \subseteq\ D_{s,t}
  \ \subseteq\ \{x\in\R\setminus\Q:\ \mu(x)\ge\tau_c\}.
\end{equation}
The upper bound in \eqref{eq:JB} gives $\dimH D_{s,t}\le 2/\tau_c$. For the lower
bound, write $\{\mu>\tau_c\}=\bigcup_{m\ge1}\{\mu\ge\tau_c+1/m\}$; countable unions do
not increase Hausdorff dimension beyond the supremum, so by \eqref{eq:JB}
\[
  \dimH\{\mu>\tau_c\}=\sup_{m\ge1}\frac{2}{\tau_c+1/m}=\frac{2}{\tau_c},
\]
and hence $\dimH D_{s,t}\ge 2/\tau_c$. Therefore
$\dimH D_{s,t}=2/\tau_c=2t/(s+t)$.

Finally, $\mu(x)=2<\tau_c$ for almost every $x$ by Khinchin's theorem, so $D_{s,t}$ is
Lebesgue-null; and $D_{s,t}$ contains every Liouville number, hence is dense. For
$s=3$, $t=2$ we get $\tau_c=5/2$ and $\dimH D_{3,2}=4/5$.
\end{proof}

\begin{remark}
The two features of $D_{3,2}$ pull in opposite directions. It is null, which is the
statistical ground for expecting $S<\infty$: a number drawn at random converges. But it
has dimension $4/5$ and is dense, so its size alone carries no information about any
particular point of $\R$. In particular Theorem~\ref{thm:dim} does \emph{not} decide the
convergence of $S$, and no sharpening of the \emph{size} of $D_{3,2}$ could: both
answers for $1/\pi$ are consistent with every statement about $D_{3,2}$ as a set.
Deciding membership of a prescribed number is a different kind of question, requiring
effective Diophantine information about that number.
\end{remark}

\begin{remark}
As $s$ increases with $t=2$ fixed, the divergence set shrinks:
\[
  \dimH D_{s,2}=\frac{4}{s+2}\;:\qquad
  s=2\mapsto1,\quad 3\mapsto\tfrac45,\quad 4\mapsto\tfrac23,\quad 6\mapsto\tfrac12,
  \quad 12.20642\mapsto 0.28156\ldots
\]
The last value is the exponent at which Corollary~\ref{cor:pi}(a) becomes
unconditional for $\pi$. Even there the exceptional set has positive dimension; what
makes the statement unconditional is the hypergeometric bound on $\mu(\pi)$, not the
smallness of $D_{s,2}$.
\end{remark}

\section{The critical exponent}\label{sec:sharp}

Corollary~\ref{cor:crit} leaves open the borderline $s=t(\mu(x)-1)$, i.e.\
$\mu(x)=\tau_c$. Both behaviours occur there, and a single family exhibits them.

\begin{proposition}\label{prop:sharp}
Let $t>1$ and $\tau>2$, and put $\beta=\tau-1>1$ and $s=t\beta$. For $r\ge0$ define
$x_r=[0;a_1,a_2,\dots]$ recursively by
\begin{equation}\label{eq:constr}
  a_1=2,\qquad
  a_{k+1}=\Bigl\lceil \frac{q_k^{\,\beta-1}}{(k+1)^{r/t}}\Bigr\rceil \quad(k\ge1).
\end{equation}
Then $x_r$ is a well-defined irrational number with
\[
  \frac{q_{k+1}^{\,t}}{q_k^{\,s}}\;=\;\frac{1}{(k+1)^{r}}\bigl(1+o(1)\bigr),
  \qquad
  \frac{\log q_{k+1}}{\log q_k}\;\longrightarrow\;\beta ,
\]
so that $\mu(x_r)=\tau$ for every $r\ge0$, while
\[
  \Fst(x_r)<\infty \iff r>1 .
\]
In particular $x_2$ and $x_1$ both have irrationality exponent exactly $\tau$, and
$\Fst(x_2)<\infty$ while $\Fst(x_1)=\infty$. Moreover the summands of $\Fst(x_r)$
tend to $0$ for every $r>0$, whereas for $r=0$ they do not: for $q_k\le n<q_{k+1}$ one
has $\normx{nx_r}\ge\normx{q_kx_r}>1/(q_{k+1}+q_k)$ by
\eqref{eq:cf1}--\eqref{eq:cf2}, hence
\begin{equation}\label{eq:termsize}
  \frac{1}{n^{s}\abs{\sin(\pi nx_r)}^{t}}
  \;\le\;\frac{(q_{k+1}+q_k)^{t}}{2^{t}\,q_k^{\,s}}
  \;=\;O\bigl((k+1)^{-r}\bigr),
\end{equation}
which tends to $0$ when $r>0$. For $r=0$, by contrast, \eqref{eq:FG} and \eqref{eq:cf1}
give at $n=q_k$
\[
  \frac{1}{q_k^{\,s}\abs{\sin(\pi q_kx_0)}^{t}}
  \;\ge\;\pi^{-t}\,\frac{1}{q_k^{\,s}\normx{q_kx_0}^{t}}
  \;>\;\pi^{-t}\,\frac{q_{k+1}^{t}}{q_k^{\,s}}\;\longrightarrow\;\pi^{-t},
\]
and in fact the terms at $n=q_k$ converge to $\pi^{-t}$, since $a_{k+1}\to\infty$ forces
$q_k/q_{k+1}\to0$ and hence $q_{k+1}\normx{q_kx_0}\to1$ by \eqref{eq:cf1}. Along
$m_k=q_k+1$ one has $\normx{m_kx_0}\to\normx{x_0}>0$, so those terms tend to $0$.
The sequence of summands of $\mathcal F_{s,t}(x_0)$ therefore has the two subsequential
limits $\pi^{-t}$ and $0$ and does not converge.
\end{proposition}

\begin{proof}
Every $a_{k+1}$ in \eqref{eq:constr} is a positive integer, and any sequence of positive
integers is the partial quotient sequence of a unique irrational in $(0,1)$; the
denominators are determined by $q_{k+1}=a_{k+1}q_k+q_{k-1}$, so $x_r$ is well defined.
Put
\[
  u_k=\frac{q_k^{\,\beta-1}}{(k+1)^{r/t}} .
\]
By \eqref{eq:cf3}, $\log u_k\ge(\beta-1)(k-1)\log\varphi-(r/t)\log(k+1)\to\infty$, so
$u_k\to\infty$. From $u_k\le a_{k+1}\le u_k+1$ and $q_{k-1}<q_k$,
\[
  u_kq_k\ \le\ q_{k+1}\ \le\ (u_k+2)q_k\ =\ u_kq_k\Bigl(1+\frac{2}{u_k}\Bigr),
\]
i.e.\ $q_{k+1}=u_kq_k\bigl(1+O(u_k^{-1})\bigr)$. Since $\beta t=s$,
\[
  \bigl(u_kq_k\bigr)^{t}q_k^{-s}
  \;=\;u_k^{t}\,q_k^{\,t-s}
  \;=\;q_k^{(\beta-1)t}(k+1)^{-r}q_k^{\,t-s}
  \;=\;(k+1)^{-r},
\]
so $q_{k+1}^{t}/q_k^{s}=(k+1)^{-r}\bigl(1+O(u_k^{-1})\bigr)^{t}$, which is the first
assertion. Hence $\Sigma_{s,t}(x_r)<\infty$ if and only if $\sum_k(k+1)^{-r}<\infty$,
i.e.\ if and only if $r>1$, and Proposition~\ref{prop:red} converts this into the
statement about $\Fst(x_r)$.

For the exponent, taking logarithms in $q_{k+1}=u_kq_k(1+O(u_k^{-1}))$ gives
\[
  \frac{\log q_{k+1}}{\log q_k}
  \;=\;\beta-\frac{r}{t}\cdot\frac{\log(k+1)}{\log q_k}+o(1)
  \;\longrightarrow\;\beta ,
\]
because $\log q_k\ge(k-1)\log\varphi$ grows at least linearly while $\log(k+1)$ does
not. By \eqref{eq:mu}, $\mu(x_r)=1+\beta=\tau$.
\end{proof}

\begin{remark}
The hypothesis $\tau>2$ is necessary for the coexistence: at $\tau=2$ one has $s=t$ and
every irrational diverges by Corollary~\ref{cor:crit}(c).
\end{remark}

\begin{remark}\label{rem:meiburg}
Write $u=s$ and $v=t$ for the notation of \cite{Meiburg}. For $u>v>1$ the member $x_0$ recovers
\cite[Thm.~3.1]{Meiburg} in full, since for $r=0$ both the series and its summands fail
to converge. The members $x_r$ with $0<r\le1$ give the strictly weaker
series-divergence conclusion, but they are the more informative ones: by
\eqref{eq:termsize} their summands \emph{do} tend to $0$, so these are numbers of
irrationality exponent exactly $\tau$ at which $\Fst$ diverges without any single term
misbehaving. The convergent members $x_r$, $r>1$, settle \cite[Conj.~3.1]{Meiburg} in
the range $v>1$.

The conjecture concerns not $\sin$ alone but any sine-like function $P$ of period
$\alpha$, and the construction covers that formulation as well. Take $x=x_r$ and
$\alpha=1/x$. The sine-like condition provides constants $c_P,C_P>0$ with
$c_P\alpha\normx{nx}\le\abs{P(n)}\le C_P\alpha\normx{nx}$ for all $n$, whence
\[
  (C_P\alpha)^{-v}\,\mathcal G_{u,v}(x)
  \;\le\;\sum_{n\ge1}\frac{1}{n^{u}\abs{P(n)}^{v}}
  \;\le\;(c_P\alpha)^{-v}\,\mathcal G_{u,v}(x) .
\]
So a single $x_r$ with $r>1$ gives convergence simultaneously for every sine-like $P$ of
that period, and $\mu(\alpha)=\mu(x_r)=1+u/v$ by Lemma~\ref{lem:inv}, which is what
\cite[Conj.~3.1]{Meiburg} asks for.

The range $0<v\le1$ is not covered here, and there the conjecture as stated is false:
with the normalization $\abs{\sin}\le1$, $\mathcal F_{u,v}(x)\ge\sum_{n\ge1}n^{-u}=\infty$
whenever $0<u\le1$, so no convergent example exists for such $u$, while the hypothesis
$1+u/v>2$ is satisfied throughout the non-empty region $v<u\le1$ --- for instance
$(u,v)=(3/4,1/2)$. The necessary additional condition is $u>1$. For $v\le1$ and $u>1$
the question remains open: the upper bound of Proposition~\ref{prop:red} uses
$\zeta(t)<\infty$, and for $t\le1$ the dyadic blocks contribute at the same order as the
spikes, so the criterion itself must be reformulated.
\end{remark}

\begin{corollary}\label{cor:noiff}
Let $t>1$ and $s>t$, and put $\tau_c=1+s/t$. Then
\[
  \mu(x)<\tau_c\ \Longrightarrow\ \Fst(x)<\infty\ \Longrightarrow\ \mu(x)\le\tau_c ,
\]
and neither implication can be reversed: $x_2$ of Proposition~\ref{prop:sharp} has
$\mu=\tau_c$ and $\Fst<\infty$, so the first fails to be an equivalence, and $x_1$ has
$\mu=\tau_c$ and $\Fst=\infty$, so the second fails as well. Consequently no condition
on $\mu(x)$ alone --- with strict or non-strict inequality --- is equivalent to the
convergence of $\Fst(x)$. A necessary and sufficient condition is available, but it is
not a function of $\mu$: by Proposition~\ref{prop:red},
$\Fst(x)<\infty\iff\sum_kq_{k+1}^{t}/q_k^{s}<\infty$.
\end{corollary}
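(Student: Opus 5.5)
The plan is to assemble Corollary~\ref{cor:noiff} from results already in hand; no new estimate is needed.

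\emph{The two forward implications.} The first implication, $\mu(x)<\tau_c\Rightarrow\Fst(x)<\infty$, is exactly Corollary~\ref{cor:crit}(a), since $\mu(x)<1+s/t$ is equivalent to $s>t(\mu(x)-1)$. For the second, I argue by contraposition. If $\mu(x)>\tau_c$, then $s<t(\mu(x)-1)$, and Corollary~\ref{cor:crit}(b) gives $\Fst(x)=\infty$. Hence $\Fst(x)<\infty$ forces $\mu(x)\le\tau_c$.

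\emph{Failure of the converses.} Apply Proposition~\ref{prop:sharp} with $\tau=\tau_c>2$; this is legitimate because $s>t$. Then $\beta=s/t$ and $s=t\beta$, matching that proposition's hypotheses with $t>1$. It yields $\mu(x_r)=\tau_c$ for every $r\ge0$, together with $\Fst(x_r)<\infty\iff r>1$.
\begin{itemize}
\item The point $x_2$ has $\mu(x_2)=\tau_c$, so it does not satisfy $\mu<\tau_c$, yet $\Fst(x_2)<\infty$. So the first implication is not an equivalence.
\item The point $x_1$ satisfies $\mu(x_1)\le\tau_c$ but $\Fst(x_1)=\infty$. So the second implication is not reversible either.
\end{itemize}

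\emph{No condition on $\mu$ alone suffices.} Suppose convergence were equivalent to $\mu(x)\in A$ for some set $A\subseteq[2,\infty]$. Since $x_1$ and $x_2$ have the same value $\mu=\tau_c$, they would have to behave identically, which contradicts the previous step. This covers in particular the strict and non-strict inequalities $\mu<\tau_c$ and $\mu\le\tau_c$.

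\emph{A genuine criterion.} The necessary and sufficient condition is the equivalence $\Fst(x)<\infty\iff\Sigma_{s,t}(x)<\infty$ of Proposition~\ref{prop:red}, which holds for $t>1$. The same pair $x_1,x_2$ shows that $\Sigma_{s,t}$ is not determined by $\mu$.

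The only point needing care is the bookkeeping in the previous step, namely phrasing ``condition on $\mu$ alone'' as membership in a set of values. Everything else is a direct citation, so I do not expect a real obstacle.
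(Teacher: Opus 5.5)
Your proposal is correct and follows the paper's own inline argument: Corollary~\ref{cor:crit}(a) and the contrapositive of Corollary~\ref{cor:crit}(b) give the two implications, the members $x_2$ and $x_1$ of Proposition~\ref{prop:sharp} with $\tau=\tau_c>2$ show that neither implication reverses, and Proposition~\ref{prop:red} supplies the criterion. Your phrasing of ``a condition on $\mu$ alone'' as membership $\mu(x)\in A$ is a clean way to make the paper's ``consequently'' precise.
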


At $(s,t)=(3,2)$ this says that ``$S$ converges $\iff\mu(\pi)\le5/2$'' cannot be an
instance of a general theorem about numbers of irrationality exponent $5/2$, and that
replacing $\le$ by $<$ does not repair matters. For the individual number $1/\pi$ such
an equivalence is of course not excluded. The preprint \cite{LopezZapata} asserts one;
in the version we consulted, the convergence criterion of its Lemma~3.3 carries a
strict inequality, and the subsequent derivation of the equivalence does not treat the
case of equality, which is exactly the case Proposition~\ref{prop:sharp} shows to be
undetermined by $\mu$.

\section{The critical fibre}\label{sec:fibre}

Corollary~\ref{cor:noiff} says that $\mu$ does not decide convergence on the critical
fibre $\{\mu=\tau_c\}$. One may ask whether the two parts of that fibre differ in size.
Their Hausdorff dimensions agree; a finer gauge does tell them apart. Throughout this section $t>1$, $\tau>2$, $\beta=\tau-1$ and $s=t\beta$, and
\[
  C_\tau=\{x\in\R\setminus\Q:\ \mu(x)=\tau,\ \Fst(x)<\infty\},\qquad
  E_\tau=\{x\in\R\setminus\Q:\ \mu(x)=\tau,\ \Fst(x)=\infty\},
\]
so that $\{\mu=\tau\}=C_\tau\sqcup E_\tau$, and both are non-empty by
Proposition~\ref{prop:sharp}.

The following elementary identity converts the criterion of
Proposition~\ref{prop:red} into a statement about the quality of the rational
approximations themselves, which is the form in which the metric theory can be applied.

\begin{lemma}\label{lem:bridge}
For irrational $x$ and $k\ge1$ put $\rho_k(x)=q_k^{\,\tau}\abs{x-p_k/q_k}$. Then
\[
  \bigl(2\rho_k\bigr)^{-t}\;<\;\frac{q_{k+1}^{\,t}}{q_k^{\,s}}\;<\;\rho_k^{-t},
\]
and consequently $\Sigma_{s,t}(x)<\infty$ if and only if
$\sum_k\rho_k(x)^{-t}<\infty$.
\end{lemma}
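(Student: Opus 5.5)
The plan is to express $q_{k+1}^t/q_k^s$ directly in terms of $\rho_k$, using the convergent estimate \eqref{eq:cf1} and the relation $s=t\beta=t(\tau-1)$. From \eqref{eq:cf1} (valid for $k\ge1$),
\[
  \frac{1}{q_k(q_{k+1}+q_k)}\;<\;\Bigl|x-\frac{p_k}{q_k}\Bigr|\;<\;\frac{1}{q_kq_{k+1}} .
\]
Multiplying by $q_k^{\tau}$ gives
\[
  \frac{q_k^{\tau-1}}{q_{k+1}+q_k}\;<\;\rho_k\;<\;\frac{q_k^{\tau-1}}{q_{k+1}} .
\]
The right inequality rearranges to $q_{k+1}<q_k^{\beta}/\rho_k$. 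Raising to the power $t>0$ and using $q_k^{t\beta}=q_k^{s}$ yields $q_{k+1}^t/q_k^s<\rho_k^{-t}$.

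For the left inequality, note that $q_{k+1}>q_k$ for $k\ge1$, so $q_{k+1}+q_k<2q_{k+1}$. Hence $\rho_k>q_k^{\beta}/(2q_{k+1})$, that is, $q_{k+1}>q_k^\beta/(2\rho_k)$. Raising to the power $t$ gives $(2\rho_k)^{-t}<q_{k+1}^t/q_k^s$. The only points needing care are the index restriction $k\ge1$ in \eqref{eq:cf1}, noted in Remark~\ref{rem:k0}, and the strictness of $q_{k+1}>q_k$; both are available for $k\ge1$.

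For the equivalence, the two-sided bound gives $2^{-t}\rho_k^{-t}<q_{k+1}^t/q_k^s<\rho_k^{-t}$ for all $k\ge1$. So $\sum_{k\ge1}q_{k+1}^t/q_k^s$ and $\sum_{k\ge1}\rho_k^{-t}$ converge or diverge together, by comparison. The term $k=0$ of $\Sigma_{s,t}(x)$ is a single finite number and does not affect convergence.

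There is no real obstacle; the only thing to watch is the bookkeeping of exponents, namely that $\tau-1=\beta$ and $t\beta=s$, so that the power of $q_k$ cancels exactly.
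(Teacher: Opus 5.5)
Your proof is correct and follows the paper's argument: divide \eqref{eq:cf1} by $q_k$, multiply by $q_k^{\tau}$, use $q_{k+1}+q_k<2q_{k+1}$ for $k\ge1$, invert, and raise to the power $t$ with $t\beta=s$. The equivalence then follows by comparison, since the two sides differ by the factor $2^{t}$, and you correctly note that the $k=0$ term of $\Sigma_{s,t}(x)$ is finite and irrelevant.
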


\begin{proof}
By \eqref{eq:cf1}, $\frac{1}{q_k(q_{k+1}+q_k)}<\abs{x-p_k/q_k}<\frac{1}{q_kq_{k+1}}$.
Multiplying by $q_k^{\tau}=q_k^{1+\beta}$ and using $q_{k+1}+q_k<2q_{k+1}$ gives
\[
  \frac{q_k^{\beta}}{2q_{k+1}}\;<\;\rho_k\;<\;\frac{q_k^{\beta}}{q_{k+1}} ,
  \qquad\text{hence, inverting,}\qquad
  \frac{1}{2}\,\rho_k^{-1}\;<\;\frac{q_{k+1}}{q_k^{\beta}}\;<\;\rho_k^{-1} .
\]
Raising to the power $t$ and recalling
$q_{k+1}^{t}/q_k^{s}=(q_{k+1}/q_k^{\beta})^{t}$ gives the display; the last assertion
follows since the two sides differ by the factor $2^{t}$.
\end{proof}

The size of $C_\tau$ and $E_\tau$ is determined in Section~\ref{sec:gauge}: they have
the same Hausdorff dimension $2/\tau$ (Corollary~\ref{thm:fibre}), but they are told
apart by a finer gauge (Theorem~\ref{thm:gaugefibre}). Nothing in that section uses the
theory of exact approximation order; we record the alternative route, which was how we
first obtained Corollary~\ref{thm:fibre}, because it is shorter. For
$\psi:\N\to(0,\infty)$ set
\[
  W(\psi)=\Bigl\{x\in\R:\ \bigl|x-\tfrac pq\bigr|<\psi(q)
  \text{ for infinitely many }(p,q)\in\mathbb Z\times\N\Bigr\},
\]
\[
  E(\psi)=W(\psi)\setminus\bigcup_{0<c<1}W(c\psi),
\]
so that $x\in E(\psi)$ if and only if $\abs{x-p/q}<\psi(q)$ has infinitely many
solutions while, for each $c\in(0,1)$, $\abs{x-p/q}\ge c\,\psi(q)$ for all sufficiently
large $q$ and all $p$.

\begin{theorem}[Bugeaud; stated as in {\cite[\S1.3]{FraserWheeler}}]\label{thm:bugeaud}
Suppose that $q\mapsto q^{2}\psi(q)$ is non-increasing and that
$\sum_{q\ge1}q\,\psi(q)<\infty$, and put
$\lambda=\liminf_{q\to\infty}\log(1/\psi(q))/\log q$. Then
$\dimH E(\psi)=\dimH W(\psi)=2/\lambda$.
\end{theorem}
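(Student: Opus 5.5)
This result is quoted from the literature, so I only outline how I would prove it. The plan is to prove $\dimH W(\psi)\le 2/\lambda$ by a covering argument. For the reverse inequality I would build, inside $E(\psi)$, a Cantor set defined by continued-fraction conditions and bound its dimension from below with the mass distribution principle. Since $E(\psi)\subseteq W(\psi)$, these two bounds give both equalities.

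\emph{Upper bound.} Fix $\lambda'<\lambda$. Then $\psi(q)\le q^{-\lambda'}$ for all large $q$. For each $Q$, the set $W(\psi)\cap[0,1]$ is covered by the intervals of radius $\psi(q)$ about $p/q$, with $q\ge Q$ and $0\le p\le q$. For $\sigma>2/\lambda'$ the $\sigma$-cost of this cover is
\[
  \sum_{q\ge Q}(q+1)(2q^{-\lambda'})^{\sigma}\to0 .
\]
Hence $\dimH W(\psi)\le 2/\lambda'$, and letting $\lambda'\uparrow\lambda$ gives $\dimH W(\psi)\le 2/\lambda$. The same computation with $\sigma=1$ gives Lebesgue-nullity, since $\sum q\psi(q)<\infty$.

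\emph{Reduction of exactness to convergents.} Monotonicity of $q^2\psi(q)$ together with $\sum q\psi(q)<\infty$ forces $q^2\psi(q)\to0$. Let $x$ have partial quotients bounded by $M$ except along a sparse set $K$ of special indices. I would check three cases.
\begin{itemize}
\item If $p/q$ is not a convergent and $\abs{x-p/q}<1/(2q^2)$ fails, then $\abs{x-p/q}\ge 1/(2q^2)\gg\psi(q)$. (By Legendre, every $p/q$ with $\abs{x-p/q}<1/(2q^2)$ is a convergent, so this case covers all non-convergents.)
\item If $p_k/q_k$ is a convergent with $k\notin K$, then $\abs{x-p_k/q_k}\ge 1/((M+2)q_k^2)\gg\psi(q_k)$.
\item If $k\in K$, I would use
\[
  \abs{x-p_k/q_k}=\frac{1}{q_k^{2}\bigl(a_{k+1}+[0;a_{k+2},\dots]+q_{k-1}/q_k\bigr)} .
\]
The quantity $A_k=1/(q_k^2\psi(q_k))$ tends to $\infty$. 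Choose $a_{k+1}=\lceil A_k\rceil+1$, or a nearby integer, so that
\[
  (1-\epsilon_k)\psi(q_k)<\abs{x-p_k/q_k}<\psi(q_k),\qquad \epsilon_k=O(1/A_k)\to0 ,
\]
whatever the later digits are.
\end{itemize}
Every such $x$ therefore lies in $W(\psi)$. It also lies outside every $W(c\psi)$ with $c<1$, because only finitely many $q$ come close. So these $x$ belong to $E(\psi)$.

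\emph{Lower bound (the main obstacle).} Choose the special indices along a sparse sequence of denominators at which $\log(1/\psi(q))/\log q$ is close to $\lambda$. This is delicate because the $q_k$ are not free: I would have to let the free digits run long enough that $q_k$ can be steered near a prescribed good scale. Between consecutive special indices, take long blocks of free digits in $\{1,\dots,M\}$. The free blocks have dimension tending to $1$ as $M\to\infty$, and the forced digit $a_{k+1}\approx 1/(q_k^2\psi(q_k))$ shrinks intervals from size $q_k^{-2}$ to roughly $\psi(q_k)\approx q_k^{-\lambda}$.

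Spread mass uniformly over the cylinders and take the gaps between special indices very long. I expect the local dimension to come out as
\[
  \frac{2\log q_k}{\lambda\log q_k}\;=\;\frac{2}{\lambda}
\]
up to $\epsilon$. Proving this requires Jarník's estimate for $\mu(B(x,r))$ at the intermediate scales $\psi(q_k)<r<q_k^{-2}$. There the ball meets many sub-cylinders of the forced level, and one must show that the mass in the ball is at most about $r^{2/\lambda-\epsilon}$. This estimate is the step I expect to require the most work. The mass distribution principle then gives $\dimH E(\psi)\ge 2/\lambda-\epsilon$, and letting $\epsilon\to0$ and $M\to\infty$ completes the argument.
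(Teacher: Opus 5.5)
The paper gives no proof of this statement. It is quoted from Bugeaud via Fraser--Wheeler and used only in Remark~\ref{rem:viaexact}, so your outline has to stand on its own. Your upper bound and your reduction of exactness to the convergents are fine. The lower-bound construction, however, fails as you have specified it.

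You fix the special partial quotient up to $O(1)$ choices ($\epsilon_k=O(1/A_k)$), and then say it shrinks the cylinder from $q_k^{-2}$ to about $\psi(q_k)$. It does not. A single level-$(k+1)$ cylinder has length $\asymp q_{k+1}^{-2}\asymp(A_kq_k)^{-2}=q_k^{2}\psi(q_k)^{2}\approx q_k^{2-2\lambda}$. Only a union over many values of $a_{k+1}$ fills an interval of length $\approx\psi(q_k)$.

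So your Cantor set is too thin. At the $j$-th special stage, each of its points lies in one of at most $N_j^{2+o(1)}$ cylinders of level $k+1$, because the stopped level-$k$ cylinders are disjoint and have length $\gtrsim N_j^{-2}$. Each of these has length at most $N_j^{-2(\lambda-1)}$, up to arbitrarily small losses in the exponent. Hence the $\sigma$-sums tend to $0$ for every $\sigma>1/(\lambda-1)$. Since $1/(\lambda-1)<2/\lambda$ whenever $2<\lambda<\infty$, no mass estimate can repair this.

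In measure terms, the ball of radius $r\approx q_{k+1}^{-2}$ already carries all of $\mu(I_k)\approx q_k^{-2}$, which gives exponent $1/(\lambda-1)$. Your remark that the ball ``meets many sub-cylinders of the forced level'' presupposes many admissible values of $a_{k+1}$, and $\epsilon_k=O(1/A_k)$ excludes that. Also, the range $\psi(q_k)<r<q_k^{-2}$ that you flag as hard is the harmless one; the dangerous scales lie below $\psi(q_k)$.

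The missing idea is that exactness needs only $\epsilon_k\to0$. Let $a_{k+1}$ run over \emph{all} integers in $[A_k,(1+\epsilon_k)A_k]$, with $\epsilon_k\to0$ slowly: $\epsilon_kA_k\to\infty$ and $\log(1/\epsilon_k)=o(\log q_k)$. Your formula still gives $\psi(q_k)/(1+\epsilon_k+2/A_k)<\abs{x-p_k/q_k}<\psi(q_k)$, so these points remain in $E(\psi)$. Now $\mu(I_k)$ is shared equally by $\asymp\epsilon_kA_k$ adjacent subcylinders, which fill an interval of length $\asymp\epsilon_k\psi(q_k)$. For $q_{k+1}^{-2}\lesssim r\lesssim\epsilon_k\psi(q_k)$ this gives $\mu(B(x,r))\lesssim\mu(I_k)\,r/(\epsilon_k\psi(q_k))\le r^{2/\lambda-\epsilon}$. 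The binding case is the top end, where $q_k^{-2}$ is compared with $(\epsilon_kq_k^{-\lambda})^{2/\lambda}$. This is the Jarn\'ik--Bugeaud mechanism.

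Two smaller points.
\begin{enumerate}
\item ``Uniform'' mass on the free blocks must mean mass comparable to $\abs{I}^{s_M}$, or proportional to length. Equal weights on the digits $1,\dots,M$ give a measure whose dimension tends to $1/2$, not $1$.
\item The steering you call delicate follows from the monotonicity of $\psi$. If $\psi(N)\ge N^{-\lambda-\epsilon}$, then every $q\in[N^{1-\delta},N]$ has $\psi(q)\ge\psi(N)\ge q^{-(\lambda+\epsilon)/(1-\delta)}$. So you can stop each branch at the first $k$ with $q_k\ge N^{1-\delta}$, which forces $q_k\le(M+1)N^{1-\delta}\le N$.
\end{enumerate}
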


\begin{remark}\label{rem:viaexact}
Theorem~\ref{thm:bugeaud} yields Corollary~\ref{thm:fibre} directly, as follows; it is
not used anywhere else in this note. Both $C_\tau$ and $E_\tau$ lie in $\{\mu\ge\tau\}$,
so their dimensions are at most $2/\tau$ by \eqref{eq:JB}. For the lower bounds take
\[
  \psi_E(q)=q^{-\tau},
  \qquad
  \psi_C(q)=q^{-\tau}(\log q)^{\gamma}\ \ (\gamma>1/t),
\]
each modified on a finite range so that $q^{2}\psi$ is non-increasing; both satisfy the
hypotheses with $\lambda=\tau$, so $\dimH E(\psi_E)=\dimH E(\psi_C)=2/\tau$. If
$x\in E(\psi_E)$ then $\abs{x-p/q}<q^{-\tau}<\frac12q^{-2}$ for infinitely many reduced
$p/q$, which are then convergents by Legendre's theorem, so $\rho_k(x)\le1$ infinitely
often and $\Fst(x)=\infty$ by Lemma~\ref{lem:bridge}; the exactness gives $\mu(x)=\tau$,
whence $E(\psi_E)\subseteq E_\tau$. If $x\in E(\psi_C)$ then, taking $c=\frac12$,
$\rho_k(x)\ge\frac12(\log q_k)^{\gamma}$ for all large $k$, so
$\sum_k\rho_k(x)^{-t}<\infty$ by \eqref{eq:cf3} and $\gamma t>1$; again $\mu(x)=\tau$,
whence $E(\psi_C)\subseteq C_\tau$.
\end{remark}

\section{A zero--infinity law and the gauge that separates}\label{sec:gauge}

Ordinary Hausdorff dimension does not see the difference between $C_\tau$ and $E_\tau$
--- both have dimension $2/\tau$, as Corollary~\ref{cor:nodim} below records. A gauge
does, and the threshold is exactly $\kappa=-1$.
Throughout, $t>1$, $s>t$, $\tau=1+s/t>2$, and for $\kappa\in\R$ we use the dimension
function
\begin{equation}\label{eq:gauge}
  h_\kappa(r)\;=\;r^{2/\tau}\Bigl(\log\frac1r\Bigr)^{\kappa}
  \qquad (0<r<r_0),
\end{equation}
extended arbitrarily and monotonically to $[r_0,\infty)$. For every $\kappa\in\R$, after
shrinking $r_0$, $h_\kappa$ is increasing with $h_\kappa(0^+)=0$ and
$h_\kappa(2r)\asymp h_\kappa(r)$, and $r^{-1}h_\kappa(r)$ is decreasing: writing
$a=2/\tau<1$ and $L=\log(1/r)$ one has
$\frac{d}{dr}\bigl(r^{a}L^{\kappa}\bigr)=r^{a-1}L^{\kappa-1}(aL-\kappa)>0$ once
$L>\kappa/a$, while $r^{-1}h_\kappa(r)=r^{a-1}L^{\kappa}$ is decreasing because
$a-1<0$ dominates. Also $h_\kappa(r)/r\to\infty$, so
$\mathcal H^{h_\kappa}([0,1])=\infty$. Finally, if $\kappa\ge\kappa_0$ then
$h_\kappa\ge h_{\kappa_0}$ on $(0,e^{-1})$, whence
\begin{equation}\label{eq:monogauge}
  \mathcal H^{h_\kappa}(A)\ \ge\ \mathcal H^{h_{\kappa_0}}(A)
  \qquad\text{for every }A\subseteq\R,\ \kappa\ge\kappa_0 .
\end{equation}

We use the two classical halves of the Hausdorff-measure theory of the limsup sets
\[
  W^{*}(\psi)=\Bigl\{x\in\R:\ \bigl|x-\tfrac pq\bigr|<\psi(q)
  \text{ for infinitely many coprime }(p,q),\ q\ge1\Bigr\}.
\]

\begin{lemma}[Hausdorff--Cantelli]\label{lem:HC}
Let $h$ be a dimension function with $h(2r)\le Ch(r)$ for small $r$, and let
$\psi(q)\to0$. If $\sum_{q\ge1}q\,h(\psi(q))<\infty$ then
$\mathcal H^{h}\bigl(W^{*}(\psi)\bigr)=0$.
\end{lemma}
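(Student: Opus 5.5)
The plan is to run the standard covering argument with the natural cover of the limsup set, namely the intervals around the rationals $p/q$. Fix $\eta>0$. Because $\psi(q)\to0$, there is $Q_0$ such that $2\psi(q)<\eta$ for all $q\ge Q_0$. For each $N\ge Q_0$, every $x\in W^{*}(\psi)$ has a solution $p/q$ with $q\ge N$. Hence
\[
  W^{*}(\psi)\cap[0,1]\ \subseteq\ \bigcup_{q\ge N}\ \bigcup_{\substack{0\le p\le q\\ \gcd(p,q)=1}} B\bigl(p/q,\psi(q)\bigr).
\]
Note that a solution with $x\in[0,1]$ and $\psi(q)<1$ forces $-1\le p\le q+1$, so at most $q+3$ values of $p$ occur for each $q$.

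The next step is to estimate the $h$-cost of this cover. Each ball has diameter $2\psi(q)$, so its cost is $h(2\psi(q))\le C\,h(\psi(q))$ by the doubling hypothesis. The total cost is therefore at most
\[
  C\sum_{q\ge N}(q+3)\,h(\psi(q))\ \le\ 4C\sum_{q\ge N} q\,h(\psi(q)).
\]
This is the tail of a convergent series, so it tends to $0$ as $N\to\infty$. It follows that $\mathcal H^{h}_\eta\bigl(W^{*}(\psi)\cap[0,1]\bigr)=0$ for every $\eta$, and letting $\eta\to0$ gives $\mathcal H^{h}\bigl(W^{*}(\psi)\cap[0,1]\bigr)=0$.

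To pass to all of $\R$, note that $W^{*}(\psi)$ is invariant under $x\mapsto x+m$ for integers $m$, since $(p,q)\mapsto(p+mq,q)$ preserves coprimality. Thus $W^{*}(\psi)$ is a countable union of translates of $W^{*}(\psi)\cap[0,1]$, each $\mathcal H^h$-null, and countable subadditivity finishes the proof.

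The only point requiring care is the doubling constant. It is needed only for small $r$, which is guaranteed because $2\psi(q)<\eta$ is small. If $h$ is defined only near $0$, this is harmless, since $\mathcal H^h$ depends only on $h$ near $0$.
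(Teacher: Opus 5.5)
Your argument is the paper's covering argument: reduce by periodicity to a unit interval, use the balls $B(p/q,\psi(q))$ with $q\ge N$ as an admissible cover, pass from $h(2\psi(q))$ to $h(\psi(q))$ by doubling, and bound the cost by the tail of $\sum_q q\,h(\psi(q))$; it is correct in substance. One slip needs fixing: $\abs{x-p/q}<\psi(q)<1$ with $x\in[0,1]$ gives only $-q\psi(q)<p<q+q\psi(q)$, not $-1\le p\le q+1$ (that needs $q\psi(q)\le 2$, which $\psi(q)\to0$ does not give), so either count the balls meeting $[0,1]$ as at most $3q+3$ per $q$, as the paper does, or justify the restriction $0\le p\le q$ in your display directly (for $x\in(0,1)$, every solution with $\psi(q)<\min\{x,1-x\}$ has $p/q\in(0,1)$, and at $x=0$ or $x=1$ replace $p$ by $-p$ or $2q-p$), and in either case only the constant changes.
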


\begin{proof}
Everything is $1$-periodic, so it suffices to treat $W^*(\psi)\cap[0,1)$, which for every
$Q$ is covered by those intervals $B(p/q,\psi(q))$ with $q\ge Q$ that meet $[0,1)$. Once
$q$ is large enough that $\psi(q)\le1$ there are at most $3q+3$ such $p$ for each $q$,
and the intervals have diameters at most $\delta_Q=2\sup_{q\ge Q}\psi(q)\to0$, so they
form an admissible cover at every scale $\delta_Q$, and
\[
  \mathcal H^{h}_{\delta_Q}\bigl(W^*(\psi)\cap[0,1)\bigr)
  \;\le\;C\sum_{q\ge Q}(3q+3)\,h(\psi(q))\;\xrightarrow[Q\to\infty]{}\;0 .
\]
Letting $Q\to\infty$ gives $\mathcal H^{h}(W^*(\psi)\cap[0,1))=0$.
\end{proof}

\begin{theorem}[Jarn\'ik; see \cite{BDV,BeresnevichVelani}]\label{thm:jarnik}
Let $\psi$ be such that $q\mapsto q\,\psi(q)$ is non-increasing, and let $h$ be a
dimension function with $r^{-1}h(r)$ monotonic. If $\sum_{q\ge1}q\,h(\psi(q))=\infty$
then
\[
  \mathcal H^{h}\bigl(W^{*}(\psi)\cap[0,1]\bigr)=\mathcal H^{h}\bigl([0,1]\bigr).
\]
In particular, if moreover $h(r)/r\to\infty$ as $r\to0$, then
$\mathcal H^{h}(W^{*}(\psi))=\infty$.
\end{theorem}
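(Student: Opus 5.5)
We would not reprove Jarn\'ik's theorem from scratch. Instead we would derive it from its Lebesgue-measure counterpart by the Mass Transference Principle of Beresnevich and Velani \cite{BeresnevichVelani}, which is the route taken in \cite{BDV}. In dimension one, the Principle says the following. Let $(B_i)$ be balls with radii $r_i\to0$, let $h$ be a dimension function with $r^{-1}h(r)$ monotonic, and let $B_i^{h}$ be the concentric ball of radius $h(r_i)$. If $\limsup B_i^{h}$ has full Lebesgue measure in every subinterval of $[0,1]$, then $\mathcal H^{h}(\limsup B_i\cap[0,1])=\mathcal H^{h}([0,1])$.

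We apply this to the balls $B(p/q,\psi(q))$ with $(p,q)=1$. Then $\limsup B_i=W^*(\psi)$ and $\limsup B_i^h=W^*(\Psi)$, where $\Psi(q)=h(\psi(q))$. It therefore suffices to show that $W^*(\Psi)$ has full Lebesgue measure.

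For the Lebesgue statement we cannot use Khintchine's theorem directly. Its monotonicity hypothesis asks that $q\Psi(q)=q\psi(q)\cdot h(\psi(q))/\psi(q)$ be non-increasing, and that is not guaranteed. We would instead invoke the Duffin--Schaeffer theorem (Koukoulopoulos--Maynard): $W^*(\Psi)$ has full measure as soon as $\sum_q\varphi(q)\Psi(q)=\infty$, with no monotonicity required. It remains to get this divergence from the hypothesis $\sum_q q\,h(\psi(q))=\infty$, as follows.
\begin{itemize}
\item Since $q\psi(q)$ is non-increasing, $\psi$ is non-increasing. As $h$ is increasing, $\Psi$ is non-increasing too.
\item Split into dyadic blocks $[2^k,2^{k+1})$ and use the classical estimate $\sum_{2^k\le q<2^{k+1}}\varphi(q)\gg 4^k$.
\item On each block, monotonicity of $\Psi$ gives
\[\sum_{\text{block }k}\varphi(q)\Psi(q)\gg 4^k\Psi(2^{k+1})\gg\sum_{\text{block }k+1}q\Psi(q).\]
\end{itemize}
Summing over $k$ turns divergence of $\sum q\Psi(q)$ into divergence of $\sum\varphi(q)\Psi(q)$.

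We also need $\psi(q)\to0$ so that the radii shrink; this is automatic because $\psi(q)\le\psi(1)/q$. The final clause of the theorem is then immediate: if $h(r)/r\to\infty$, then $\mathcal H^h([0,1])=\infty$, as already observed for $h_\kappa$.

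The main obstacle is checking the hypotheses of the Mass Transference Principle, in particular the monotonicity of $r^{-1}h(r)$, which we use as given.
\begin{itemize}
\item If $r^{-1}h(r)$ is non-increasing in $r$, then $h(r)\ge cr$ for small $r$, so $B_i^h\supseteq B_i$ up to constants, and the Principle applies as stated.
\item If $r^{-1}h(r)$ is increasing in $r$, then $h(r)/r$ tends to a finite limit $\ell$ as $r\to0$.
  \begin{itemize}
  \item If $\ell>0$, then $\mathcal H^h$ is comparable to Lebesgue measure and $h(r)\asymp r$. The conclusion follows directly from the Duffin--Schaeffer step applied to $\psi$ itself, since $\sum q\psi(q)\asymp\sum q\,h(\psi(q))=\infty$. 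Here full measure in $[0,1]$ gives $\mathcal H^h(W^*(\psi)\cap[0,1])=\mathcal H^h([0,1])$.
  \item If $\ell=0$, then $\mathcal H^h([0,1])=0$ and there is nothing to prove.
  \end{itemize}
\end{itemize}
The one point where we would have to be careful is the local (every-interval) full-measure requirement of the Principle. This follows from the Duffin--Schaeffer theorem together with Gallagher's zero--one law, which give full measure globally and hence in every subinterval.
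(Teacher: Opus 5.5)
Your proposal is correct and is essentially the standard derivation in the Beresnevich--Velani reference the paper cites; the paper itself gives no proof and only quotes the theorem. As there, the Mass Transference Principle reduces the claim to full Lebesgue measure of $W^{*}(h\circ\psi)$, and your dyadic bound $\sum_q\phi(q)\,h(\psi(q))\gg\sum_q q\,h(\psi(q))$ (with $\phi$ Euler's function) is exactly the regularity hypothesis of the original 1941 Duffin--Schaeffer theorem, so you do not need Koukoulopoulos--Maynard.

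One technical repair is needed. For each fixed $q$ there are infinitely many balls $B(p/q,\psi(q))$, so the radii of your sequence do not tend to $0$ as the principle requires. Restricting to $\abs{p}\le q^{2}$ fixes this without changing $W^{*}(\psi)$ or $W^{*}(h\circ\psi)$.
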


The additional hypothesis $h(r)/r\to\infty$ is needed for the infinity conclusion, not
for the measure equality: $h(r)=r^{2}$ and $\psi(q)=q^{-1}$ satisfy the displayed
hypotheses with $\sum_qq\,h(\psi(q))=\sum_qq^{-1}=\infty$, and the equality holds with
both sides equal to $\mathcal H^{2}([0,1])=0$. The intersection with $[0,1]$ cannot be
dropped: $h(r)=r$ and $\psi(q)=q^{-2}$ satisfy them, and
$W^{*}(\psi)$ contains every irrational by Dirichlet's theorem, so the left-hand side is
$\infty$ while $\mathcal H^{1}([0,1])=1$. For the gauges \eqref{eq:gauge} one has
$h_\kappa(r)/r\to\infty$ because $2/\tau<1$, so the ``in particular'' applies.

The next lemma is the only new ingredient. It converts the divergence of
$\sum_k\rho_k^{-t}$ into a single limsup condition; the point is to discard the indices
that cannot contribute, after which the denominators grow doubly exponentially and the
index $k$ can be replaced by $\log\log q_k$ rather than by $\log q_k$.

\begin{lemma}\label{lem:Dcover}
Let $t>1$, $s>t$, $\tau=1+s/t>2$, and set
\[
  \eta=\frac{\tau-2}{2}>0,\qquad b=1+\frac{\eta}{2}>1,\qquad
  \Lambda_\tau=1+\frac{1}{\log b},\qquad Q_{*}=\max\bigl\{e^{e},\,2^{2/\eta}\bigr\}.
\]
Then for every $\varepsilon>0$,
\[
  D_{s,t}\ \subseteq\ W^{*}(\widehat\psi_\varepsilon),
  \qquad
  \widehat\psi_\varepsilon(q)=\Lambda_\tau^{\,a}\,q^{-\tau}\bigl(\log\log q\bigr)^{a},
  \quad a=\frac{1+\varepsilon}{t},
\]
where $\widehat\psi_\varepsilon$ is defined by this formula for $q\ge Q_*$ and by any
positive values for the finitely many $q<Q_*$; Lemma~\ref{lem:HC}, which is where it is
used, requires no monotonicity.
\end{lemma}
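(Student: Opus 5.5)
Let $x\in D_{s,t}$. Then $\Fst(x)=\infty$, so by Proposition~\ref{prop:red} and Lemma~\ref{lem:bridge} we have $\sum_k\rho_k(x)^{-t}=\infty$. The goal is to find infinitely many $k$ with $\rho_k<\Lambda_\tau^{a}(\log\log q_k)^{a}$. Since $\rho_k=q_k^\tau\abs{x-p_k/q_k}$ and $p_k/q_k$ is reduced, this gives $\abs{x-p_k/q_k}<\widehat\psi_\varepsilon(q_k)$ for infinitely many coprime pairs, i.e.\ $x\in W^*(\widehat\psi_\varepsilon)$.

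\emph{Step 1: discard the useless indices.} Call $k$ \emph{good} if $\rho_k\le q_k^{\eta}$, and bad otherwise. For bad $k$, $\rho_k^{-t}<q_k^{-\eta t}$. By \eqref{eq:cf3} these terms form a convergent series, so divergence of $\sum_k\rho_k^{-t}$ forces $\sum_{k\ \mathrm{good}}\rho_k^{-t}=\infty$. In particular there are infinitely many good $k$.

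\emph{Step 2: good denominators grow doubly exponentially.} For good $k$, Lemma~\ref{lem:bridge} gives $q_{k+1}>q_k^{\beta}/(2\rho_k)\ge \tfrac12 q_k^{\beta-\eta}$. Now $\beta-\eta=\tau-1-(\tau-2)/2=1+\eta$, so for $q_k\ge Q_*\ge 2^{2/\eta}$ we get $q_{k+1}\ge q_k^{1+\eta/2}=q_k^{b}$. If $k<k'$ are consecutive good indices, then $q_{k'}\ge q_{k+1}\ge q_k^b$. Listing the good indices with $q_k\ge Q_*$ as $k_1<k_2<\cdots$ gives $\log q_{k_j}\ge b^{\,j-1}\log q_{k_1}$. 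Since $q_{k_1}\ge e^e$, this yields $\log\log q_{k_j}\ge (j-1)\log b+1$, i.e.
\[
  j\;\le\;1+\frac{\log\log q_{k_j}}{\log b}\;\le\;\Lambda_\tau\log\log q_{k_j},
\]
using $\log\log q_{k_j}\ge1$. This is exactly where the constant $\Lambda_\tau$ comes from.

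\emph{Step 3: compare with a convergent series.} Suppose, for contradiction, that $\rho_{k_j}\ge\Lambda_\tau^{a}(\log\log q_{k_j})^{a}$ for all large $j$. Then
\[
  \rho_{k_j}^{-t}\le\bigl(\Lambda_\tau\log\log q_{k_j}\bigr)^{-(1+\varepsilon)}\le j^{-(1+\varepsilon)},
\]
so $\sum_j\rho_{k_j}^{-t}<\infty$. This contradicts Step 1, since only finitely many good $k$ have $q_k<Q_*$. Hence $\rho_{k_j}<\Lambda_\tau^a(\log\log q_{k_j})^a$ for infinitely many $j$. Because $q_{k_j}\ge Q_*$, this means $\abs{x-p_{k_j}/q_{k_j}}<\widehat\psi_\varepsilon(q_{k_j})$ with the formula value of $\widehat\psi_\varepsilon$, so $x\in W^*(\widehat\psi_\varepsilon)$.

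The main obstacle is Step 2: extracting a genuinely geometric lower bound on $\log q$ along good indices with explicit constants. The delicate points are the threshold choice $\rho_k\le q_k^\eta$, which must simultaneously keep the bad tail summable and leave a growth exponent $b>1$, and absorbing the factor $\tfrac12$ via $q_k\ge2^{2/\eta}$. Steps 1 and 3 are routine once that bound is in place.
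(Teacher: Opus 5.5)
Your proposal is correct and matches the paper's proof essentially step for step. Both discard the indices with $\rho_k$ larger than $q_k^{\eta}$ using \eqref{eq:cf3}, both derive $q_{k+1}>q_k^{b}$ on the surviving indices with $q_k\ge Q_*$ to get $j\le\Lambda_\tau\log\log q_{k_j}$, and both compare $\sum_j\rho_{k_j}^{-t}$ with $\sum_j j^{-(1+\varepsilon)}$ to obtain infinitely many $j$ with $\rho_{k_j}<\bigl(\Lambda_\tau\log\log q_{k_j}\bigr)^{a}$.
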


\begin{proof}
Fix $x\in D_{s,t}$. By Proposition~\ref{prop:red} we have $\Sigma_{s,t}(x)=\infty$, and
by Lemma~\ref{lem:bridge}, $q_{k+1}^{t}/q_k^{s}<\rho_k^{-t}$, so
\begin{equation}\label{eq:divrho}
  \sum_{k\ge1}\rho_k^{-t}=\infty,\qquad \rho_k=\rho_k(x)=q_k^{\tau}\Bigl|x-\frac{p_k}{q_k}\Bigr| .
\end{equation}

\emph{Step 1: discard the harmless indices.} Put $I=\{k\ge1:\rho_k<q_k^{\eta}\}$. For
$k\notin I$ we have $\rho_k^{-t}\le q_k^{-\eta t}$, and $\sum_kq_k^{-\eta t}<\infty$ by
\eqref{eq:cf3}. Hence, by \eqref{eq:divrho},
\[
  \sum_{k\in I}\rho_k^{-t}=\infty ;
\]
in particular $I$ is infinite.

\emph{Step 2: on $I$ the denominators jump polynomially.} The left inequality in the
proof of Lemma~\ref{lem:bridge} reads $\rho_k>q_k^{\tau-1}/(2q_{k+1})$, i.e.
$q_{k+1}>q_k^{\tau-1}/(2\rho_k)$. For $k\in I$ this gives
\[
  q_{k+1}\;>\;\frac{q_k^{\tau-1}}{2q_k^{\eta}}\;=\;\frac12\,q_k^{1+\eta},
\]
because $\tau-1-\eta=1+\eta$. If in addition $q_k\ge Q_*$ then $q_k^{\eta/2}\ge2$, so
$\frac12q_k^{1+\eta}\ge q_k^{1+\eta/2}=q_k^{b}$ and therefore
\begin{equation}\label{eq:jump}
  q_{k+1}\;>\;q_k^{\,b}\qquad\bigl(k\in I,\ q_k\ge Q_*\bigr).
\end{equation}
Enumerate $\{k\in I:\,q_k\ge Q_*\}$ as $k_1<k_2<\cdots$ and put $Q_j=q_{k_j}$,
$R_j=\rho_{k_j}$. Since $k_{j+1}\ge k_j+1$ and $(q_k)$ increases, \eqref{eq:jump} gives
$Q_{j+1}\ge q_{k_j+1}>Q_j^{\,b}$, hence $\log Q_j\ge b^{\,j-1}\log Q_1$ and
\[
  j\;\le\;1+\frac{\log\log Q_j-\log\log Q_1}{\log b}\;\le\;\Lambda_\tau\log\log Q_j ,
\]
where the last step uses $\log\log Q_1\ge1$, valid because $Q_1\ge Q_*\ge e^{e}$. Note
that $\Lambda_\tau$ and $Q_*$ depend only on $\tau$, not on $x$.

\emph{Step 3: transfer the divergence to a limsup condition.} Only finitely many
$k\in I$ have $q_k<Q_*$, so $\sum_jR_j^{-t}=\infty$ by Step~1. As
$\sum_jj^{-(1+\varepsilon)}<\infty$, the inequality $R_j^{-t}\le j^{-(1+\varepsilon)}$
cannot hold for all large $j$; hence $R_j<j^{a}$ for infinitely many $j$, and by Step~2,
\[
  R_j\;<\;\bigl(\Lambda_\tau\log\log Q_j\bigr)^{a}
  \qquad\text{for infinitely many }j .
\]
For those $j$,
$\bigl|x-p_{k_j}/Q_j\bigr|=R_jQ_j^{-\tau}<\widehat\psi_\varepsilon(Q_j)$, and the pairs
$(p_{k_j},Q_j)$ are coprime and distinct. Thus $x\in W^{*}(\widehat\psi_\varepsilon)$.
\end{proof}

\begin{theorem}\label{thm:gaugeD}
Let $t>1$, $s>t$ and $\tau=1+s/t$. Then, with $h_\kappa$ as in \eqref{eq:gauge},
\[
  \mathcal H^{h_\kappa}\bigl(D_{s,t}\bigr)=
  \begin{cases}
    0, & \kappa<-1,\\[2pt]
    \infty, & \kappa\ge-1 .
  \end{cases}
\]
\end{theorem}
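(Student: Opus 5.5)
The proof splits at $\kappa=-1$. The case $\kappa<-1$ uses the cover of Lemma~\ref{lem:Dcover} together with Lemma~\ref{lem:HC}. The case $\kappa\ge-1$ uses Jarn\'ik's theorem (Theorem~\ref{thm:jarnik}) applied to a limsup set that sits inside $D_{s,t}$. By the monotonicity \eqref{eq:monogauge}, it is enough to prove the infinity statement at $\kappa=-1$ alone. For the zero statement we must handle each $\kappa<-1$, choosing $\varepsilon$ to depend on $\kappa$.

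\emph{Zero part.} Fix $\kappa<-1$ and choose $\varepsilon>0$ so small that $\kappa+1+\varepsilon<0$ is not what is needed. Instead we compute $\sum_q q\,h_\kappa(\widehat\psi_\varepsilon(q))$. Since $\log(1/\widehat\psi_\varepsilon(q))\sim\tau\log q$, we get
\[
h_\kappa(\widehat\psi_\varepsilon(q))\asymp q^{-2}(\log\log q)^{2a/\tau}(\log q)^{\kappa},
\]
so the series is comparable to $\sum_q q^{-1}(\log q)^{\kappa}(\log\log q)^{2a/\tau}$. The $\log\log$ factor is harmless, and the series converges exactly when $\kappa<-1$, for every $\varepsilon$. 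So any fixed $\varepsilon$, say $\varepsilon=1$, works. Clearly $\widehat\psi_\varepsilon(q)\to0$ and $h_\kappa$ is doubling. Lemma~\ref{lem:HC} then gives $\mathcal H^{h_\kappa}(W^*(\widehat\psi_\varepsilon))=0$, and Lemma~\ref{lem:Dcover} transfers this to $D_{s,t}$. This step only assembles earlier results. The one care point is that $\widehat\psi_\varepsilon$ is arbitrary for $q<Q_*$, which affects finitely many terms and so does not matter.

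\emph{Infinity part.} Take $\psi(q)=\frac12q^{-\tau}$ for $q\ge1$. Then $q\psi(q)$ is non-increasing. The gauge $h_{-1}$ has $r^{-1}h_{-1}(r)$ decreasing and $h_{-1}(r)/r\to\infty$, as recorded after \eqref{eq:gauge}. The Jarn\'ik sum is
\[
\sum_q q\,h_{-1}(\psi(q))\asymp\sum_q q^{-1}(\log q)^{-1}=\infty,
\]
so Theorem~\ref{thm:jarnik} gives $\mathcal H^{h_{-1}}(W^*(\psi))=\infty$. It remains to show $W^*(\psi)\setminus\Q\subseteq D_{s,t}$. Let $x$ be such a point. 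Since $\frac12q^{-\tau}<\frac1{2q^2}$, Legendre's theorem makes the infinitely many coprime solutions $p/q$ convergents $p_k/q_k$, and each satisfies $\rho_k(x)<\tfrac12$. Hence $\rho_k^{-t}>2^t$ infinitely often, so $\sum_k\rho_k^{-t}=\infty$. Lemma~\ref{lem:bridge} gives $\Sigma_{s,t}(x)=\infty$, and Proposition~\ref{prop:red} gives $\Fst(x)=\infty$. Removing the countable set $\Q$ does not change the measure, because $h_{-1}(0^+)=0$ means $\mathcal H^{h}$ of a point is $0$. Monotonicity \eqref{eq:monogauge} then extends the result to all $\kappa\ge-1$.

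The main obstacle is bookkeeping rather than ideas. We must check that $\psi(q)<\frac1{2q^2}$ for all $q$, which holds since $\tau>2$, so that Legendre applies. In the zero part we must confirm the asymptotic $\log(1/\widehat\psi_\varepsilon(q))\asymp\log q$ uniformly for large $q$, so that $(\log(1/\widehat\psi))^{\kappa}\asymp(\log q)^{\kappa}$. Both are routine; the substantive input, the double-logarithmic cover, is already in Lemma~\ref{lem:Dcover}.
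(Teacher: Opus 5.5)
Your proposal is correct and follows the paper's proof. The zero half is exactly Lemma~\ref{lem:Dcover} combined with Lemma~\ref{lem:HC} for any fixed $\varepsilon$, so your opening remark that $\varepsilon$ must depend on $\kappa$ is unnecessary, as you then observe yourself; the infinity half is Jarn\'ik's theorem applied to a set $W^*(c\,q^{-\tau})\subseteq D_{s,t}\cup\Q$, obtained via Legendre's theorem and Lemma~\ref{lem:bridge}, and the only differences are cosmetic: the paper uses $\psi(q)=q^{-\tau}$ and treats every $\kappa\ge-1$ directly, whereas you use $\tfrac12 q^{-\tau}$, prove the case $\kappa=-1$, and then invoke \eqref{eq:monogauge}.
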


\begin{proof}
\emph{Zero for $\kappa<-1$.} Fix any $\varepsilon>0$ and let
$\widehat\psi_\varepsilon$ be as in Lemma~\ref{lem:Dcover}, with $a=(1+\varepsilon)/t$.
Since $\log\bigl(1/\widehat\psi_\varepsilon(q)\bigr)=\tau\log q+O(\log\log\log q)$,
\[
  q\,h_\kappa\bigl(\widehat\psi_\varepsilon(q)\bigr)
  \;\asymp\; q\cdot q^{-2}\bigl(\log\log q\bigr)^{2a/\tau}\bigl(\log q\bigr)^{\kappa}
  \;=\;\frac{(\log q)^{\kappa}\bigl(\log\log q\bigr)^{2a/\tau}}{q} .
\]
Substituting $u=\log q$, the associated integral is
$\int^{\infty}u^{\kappa}(\log u)^{2a/\tau}\,du$, which converges precisely when
$\kappa<-1$, for every fixed exponent $2a/\tau$. So
$\sum_qq\,h_\kappa(\widehat\psi_\varepsilon(q))<\infty$, and Lemma~\ref{lem:HC} gives
$\mathcal H^{h_\kappa}\bigl(W^{*}(\widehat\psi_\varepsilon)\bigr)=0$;
Lemma~\ref{lem:Dcover} concludes.

\emph{Infinity for $\kappa\ge-1$.} We claim $W^{*}(q^{-\tau})\subseteq D_{s,t}\cup\Q$.
Indeed, if $x$ is irrational and $\abs{x-p/q}<q^{-\tau}$ for infinitely many coprime
pairs, then $q^{-\tau}<\frac12q^{-2}$ for large $q$, so each such $p/q$ is a convergent
by Legendre's theorem; hence $\rho_k(x)<1$ for infinitely many $k$ and, by
Lemma~\ref{lem:bridge}, infinitely many terms of $\Sigma_{s,t}(x)$ exceed $2^{-t}$.
Thus $\Sigma_{s,t}(x)=\infty$ and, by the left inequality of \eqref{eq:red} together
with \eqref{eq:FG}, $\Fst(x)=\infty$. Now $q\cdot q^{-\tau}$ is non-increasing and
\[
  \sum_qq\,h_\kappa\bigl(q^{-\tau}\bigr)\;\asymp\;\sum_q\frac{(\log q)^{\kappa}}{q}\;=\;\infty
  \qquad(\kappa\ge-1),
\]
so $\mathcal H^{h_\kappa}(W^{*}(q^{-\tau}))=\infty$ by Theorem~\ref{thm:jarnik}, the
extra hypothesis $h_\kappa(r)/r\to\infty$ being satisfied. Since $\Q$ is countable and
$h_\kappa(0^{+})=0$, $\mathcal H^{h_\kappa}(D_{s,t})=\infty$.
\end{proof}

\begin{theorem}\label{thm:gaugefibre}
Let $t>1$, $\tau>2$ and $s=t(\tau-1)$. Then
\[
  \mathcal H^{h_\kappa}\bigl(C_\tau\bigr)=\infty\quad\text{for every }\kappa\in\R,
  \qquad
  \mathcal H^{h_\kappa}\bigl(E_\tau\bigr)=
  \begin{cases}
    0, & \kappa<-1,\\[2pt]
    \infty, & \kappa\ge-1 .
  \end{cases}
\]
In particular every $h_\kappa$ with $\kappa<-1$ separates the two parts of the critical
fibre, in the strongest possible sense, and $\kappa=-1$ is the exact threshold at which
the separation disappears.
\end{theorem}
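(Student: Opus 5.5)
The plan is to handle $E_\tau$ and $C_\tau$ separately. In both cases I write the relevant part of the fibre as a limsup set $W^{*}(\psi)$ from which $h_\kappa$-null pieces have been removed. The recurring tool is a single fact, which I prove first. For every $\kappa\in\R$,
\[
  \mathcal H^{h_\kappa}\bigl(\{\mu>\tau\}\bigr)=0 .
\]
Indeed $\{\mu>\tau\}=\bigcup_{m}\{\mu\ge\tau+1/m\}$, and each piece has dimension $2/(\tau+1/m)<2/\tau$ by \eqref{eq:JB}. Choose $\delta>0$ with $2/\tau-\delta>2/(\tau+1/m)$. Then $h_\kappa(r)\ge r^{2/\tau-\delta}$ for small $r$, whatever the sign of $\kappa$, so $\mathcal H^{h_\kappa}\le\mathcal H^{2/\tau-\delta}=0$ on that piece. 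Countable subadditivity finishes. Also $\mathcal H^{h_\kappa}(\Q)=0$ since $h_\kappa(0^+)=0$.

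\emph{The set $E_\tau$.} Since $E_\tau\subseteq D_{s,t}$, the case $\kappa<-1$ follows immediately from Theorem~\ref{thm:gaugeD}. For $\kappa\ge-1$ I reuse the set from the proof of Theorem~\ref{thm:gaugeD}: there $W^{*}(q^{-\tau})\setminus\Q\subseteq D_{s,t}$ and $\mathcal H^{h_\kappa}(W^{*}(q^{-\tau}))=\infty$. Every point of $W^{*}(q^{-\tau})$ has $\mu\ge\tau$. Hence
\[
  W^{*}(q^{-\tau})\setminus\bigl(\Q\cup\{\mu>\tau\}\bigr)\subseteq D_{s,t}\cap\{\mu=\tau\}=E_\tau ,
\]
and by the preliminary fact this set still has infinite $h_\kappa$-measure.

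\emph{The set $C_\tau$.} By \eqref{eq:monogauge} it suffices to prove $\mathcal H^{h_\kappa}(C_\tau)=\infty$ for all sufficiently negative $\kappa$. Fix $\gamma''=2/t>1/t$ and a large $\gamma>\gamma''$, and set
\[
  \psi_C(q)=q^{-\tau}(\log q)^{\gamma},\qquad \psi_0(q)=q^{-\tau}(\log q)^{\gamma''},
\]
modifying $\psi_C$ on a finite range so that $q\psi_C(q)$ is non-increasing; this is possible because $\tau>2$. Up to constants,
\[
  q\,h_\kappa(\psi_C(q))\asymp\frac{(\log q)^{\kappa+2\gamma/\tau}}{q}.
\]
So Theorem~\ref{thm:jarnik} gives $\mathcal H^{h_\kappa}(W^{*}(\psi_C))=\infty$ when $\kappa\ge-1-2\gamma/\tau$. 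The same computation with Lemma~\ref{lem:HC} gives $\mathcal H^{h_\kappa}(W^{*}(\psi_0))=0$ when $\kappa<-1-2\gamma''/\tau$. Now fix $\kappa$ in the window $[-1-2\gamma/\tau,\,-1-2\gamma''/\tau)$, which is nonempty, and consider
\[
  A=W^{*}(\psi_C)\setminus\bigl(W^{*}(\psi_0)\cup\{\mu>\tau\}\cup\Q\bigr).
\]
It has infinite $h_\kappa$-measure by the three facts just listed.

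It remains to check $A\subseteq C_\tau$. First, a point of $W^{*}(\psi_C)$ has $\mu\ge\tau$, because the logarithmic factor is negligible. Together with the removal of $\{\mu>\tau\}$ this gives $\mu=\tau$. Second, a point outside $W^{*}(\psi_0)$ satisfies $\abs{x-p_k/q_k}\ge\psi_0(q_k)$ for all large $k$, since convergents are coprime pairs. That is, $\rho_k\ge(\log q_k)^{\gamma''}$, so by \eqref{eq:cf3}
\[
  \rho_k^{-t}\le\bigl((k-1)\log\varphi\bigr)^{-2}
\]
for large $k$, which is summable. Lemma~\ref{lem:bridge} and Proposition~\ref{prop:red} then give $\Fst(x)<\infty$.

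Since $\gamma$ was arbitrary, every sufficiently negative $\kappa$ is covered, and \eqref{eq:monogauge} propagates $\mathcal H^{h_\kappa}(C_\tau)=\infty$ to all larger $\kappa$. Hence it holds for every $\kappa\in\R$. The main obstacle is this $C_\tau$ step. The subset must have infinite measure for arbitrarily negative $\kappa$ while excluding all points with small $\rho_k$. The way through is the sliding window above: the Jarník side sees the exponent $\gamma$, while the Hausdorff--Cantelli side sees only the fixed exponent $\gamma''$.
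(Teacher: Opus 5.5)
Your proof is correct, and for $E_\tau$ it coincides with the paper's. One slip in the preliminary fact: to get $\mathcal H^{h_\kappa}\le\mathcal H^{2/\tau-\delta}$ you need $h_\kappa(r)\le r^{2/\tau-\delta}$ for small $r$, not $\ge$. The correct direction is true, since $r^{\delta}(\log\frac1r)^{\kappa}\to0$.

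For $C_\tau$ you take a genuinely different route.

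\emph{The paper's route.} It fixes $\kappa_0<-1$ and tunes $\gamma=\tau(-1-\kappa_0)/2$ so that the Jarn\'ik sum for $W^{*}(\psi_\gamma)$ is exactly $\sum_q 1/(q\log q)$. It then removes the whole divergence set $D_{s,t}$, which is $h_{\kappa_0}$-null by Theorem~\ref{thm:gaugeD}, and hence ultimately by Lemma~\ref{lem:Dcover}. The surviving points converge, so Corollary~\ref{cor:crit}(b) gives $\mu\le\tau$.

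\emph{Your route.} You remove only the smaller limsup set $W^{*}(\psi_0)$. Its nullity follows from plain Hausdorff--Cantelli, but only for $\kappa<-1-4/(t\tau)$. You pay for that restricted range with the sliding window in $\gamma$ and the monotonicity \eqref{eq:monogauge}.

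\emph{What each buys.} Yours makes the $C_\tau$ half independent of Lemma~\ref{lem:Dcover}: it needs only the crude bound $q_k\ge\varphi^{k-1}$. It is in effect a Hausdorff-measure version of Remark~\ref{rem:viaexact}, with $W^{*}(\psi_C)\setminus W^{*}(\psi_0)$ in place of the exact-order set $E(\psi_C)$. So the doubly exponential growth along the sparse indices is genuinely needed only for the zero half for $E_\tau$ and $D_{s,t}$ close to $\kappa=-1$, which is exactly the window the paper's last remark says the Fibonacci bound leaves open. The paper's route is shorter once Theorem~\ref{thm:gaugeD} is available, and it reaches every $\kappa_0<-1$ directly without a window.

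Incidentally, excising $\{\mu>\tau\}$ in your definition of $A$ is unnecessary. You have already shown that points of $W^{*}(\psi_C)\setminus(W^{*}(\psi_0)\cup\Q)$ satisfy $\mathcal F_{s,t}(x)<\infty$, so Corollary~\ref{cor:crit}(b) gives $\mu(x)\le\tau$, just as in the paper.
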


\begin{proof}
\emph{The set $C_\tau$.} Fix any $\kappa_0<-1$ and put $\gamma=\tau(-1-\kappa_0)/2>0$
and $\psi_\gamma(q)=q^{-\tau}(\log q)^{\gamma}$. Then $q\psi_\gamma(q)$ is
non-increasing for large $q$; since altering $\psi$ at finitely many $q$ does not change
$W^{*}(\psi)$, we may assume this throughout. As
$\log(1/\psi_\gamma(q))=\tau\log q+O(\log\log q)$,
\[
  q\,h_{\kappa_0}\bigl(\psi_\gamma(q)\bigr)
  \;\asymp\;\frac{(\log q)^{2\gamma/\tau+\kappa_0}}{q}
  \;=\;\frac{1}{q\log q},
\]
because $2\gamma/\tau+\kappa_0=(-1-\kappa_0)+\kappa_0=-1$; and
$\sum_q1/(q\log q)=\infty$. Theorem~\ref{thm:jarnik} therefore gives
$\mathcal H^{h_{\kappa_0}}(W^{*}(\psi_\gamma))=\infty$.

Next, $W^{*}(\psi_\gamma)\subseteq\{\mu\ge\tau\}$: for $\tau'<\tau$ one has
$\psi_\gamma(q)\le q^{-\tau'}$ for all large $q$, so any $x\in W^{*}(\psi_\gamma)$
satisfies $\mu(x)\ge\tau'$, and $\tau'\uparrow\tau$ gives $\mu(x)\ge\tau$. Hence if
$x\in W^{*}(\psi_\gamma)$ is irrational and $x\notin D_{s,t}$, then $\Fst(x)<\infty$,
which by Corollary~\ref{cor:crit}(b) forces $\mu(x)\le\tau$; so $\mu(x)=\tau$ and
$x\in C_\tau$. Therefore
$W^{*}(\psi_\gamma)\setminus(D_{s,t}\cup\Q)\subseteq C_\tau$, and since
$\mathcal H^{h_{\kappa_0}}(D_{s,t})=0$ by Theorem~\ref{thm:gaugeD},
\[
  \mathcal H^{h_{\kappa_0}}\bigl(C_\tau\bigr)
  \;\ge\;\mathcal H^{h_{\kappa_0}}\bigl(W^{*}(\psi_\gamma)\bigr)\;=\;\infty .
\]
For an arbitrary $\kappa\in\R$ choose $\kappa_0<\min\{\kappa,-1\}$; then
\eqref{eq:monogauge} gives $\mathcal H^{h_\kappa}(C_\tau)=\infty$.

\emph{The set $E_\tau$.} For $\kappa<-1$ we have $E_\tau\subseteq D_{s,t}$ and
Theorem~\ref{thm:gaugeD} applies. For $\kappa\ge-1$, note first that
$\{\mu>\tau\}=\bigcup_{m\ge1}\{\mu\ge\tau+1/m\}$, and by \eqref{eq:JB} each of these
sets has Hausdorff dimension $2/(\tau+1/m)<2/\tau$; choosing $d'$ with
$2/(\tau+1/m)<d'<2/\tau$ we have $h_\kappa(r)\le r^{d'}$ for small $r$, so
$\mathcal H^{h_\kappa}(\{\mu\ge\tau+1/m\})\le\mathcal H^{d'}(\{\mu\ge\tau+1/m\})=0$ and
hence $\mathcal H^{h_\kappa}(\{\mu>\tau\})=0$. By the proof of
Theorem~\ref{thm:gaugeD}, every irrational $x\in W^{*}(q^{-\tau})$ lies in $D_{s,t}$ and
satisfies $\mu(x)\ge\tau$; so
\[
  W^{*}(q^{-\tau})\setminus\bigl(\{\mu>\tau\}\cup\Q\bigr)\ \subseteq\ E_\tau ,
\]
and $\mathcal H^{h_\kappa}(E_\tau)\ge\mathcal H^{h_\kappa}(W^{*}(q^{-\tau}))=\infty$.
\end{proof}

The three sets are therefore distinguished as follows.
\[
  \begin{array}{c|ccc}
    & \mathcal H^{h_\kappa}(D_{s,t}) & \mathcal H^{h_\kappa}(E_\tau)
    & \mathcal H^{h_\kappa}(C_\tau)\\[2pt]
    \hline
    \kappa<-1 & 0 & 0 & \infty\\[2pt]
    \kappa\ge-1 & \infty & \infty & \infty
  \end{array}
\]

Taking $\kappa=0$ gives $\mathcal H^{2/\tau}(C_\tau)=\mathcal H^{2/\tau}(E_\tau)=\infty$,
which pins down the dimensions as well.

\begin{corollary}\label{thm:fibre}
For every $t>1$ and $\tau>2$, with $s=t(\tau-1)$,
\[
  \dimH C_\tau\;=\;\dimH E_\tau\;=\;\frac{2}{\tau}\;=\;\dimH\{x:\mu(x)=\tau\} .
\]
\end{corollary}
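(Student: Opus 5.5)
The plan is to combine the upper bound from the Jarník--Besicovitch theorem \eqref{eq:JB} with the lower bound already contained in Theorem~\ref{thm:gaugefibre} at $\kappa=0$. For the upper bound, note that $C_\tau$, $E_\tau$ and $\{x:\mu(x)=\tau\}$ are all contained in $\{\mu\ge\tau\}$. By \eqref{eq:JB} that set has Hausdorff dimension $2/\tau$, so each of the three dimensions is at most $2/\tau$.

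For the lower bound, specialize Theorem~\ref{thm:gaugefibre} to $\kappa=0$. There $h_0(r)=r^{2/\tau}$ for small $r$, so $\mathcal H^{h_0}$ agrees with the $2/\tau$-dimensional Hausdorff measure $\mathcal H^{2/\tau}$. This identification holds because only small scales enter the definition, and $h_0$ was altered only on $[r_0,\infty)$. The theorem gives $\mathcal H^{2/\tau}(C_\tau)=\infty$, since $\kappa=0$ is covered by the ``every $\kappa$'' clause, and $\mathcal H^{2/\tau}(E_\tau)=\infty$, since $0\ge-1$. Positive $\mathcal H^{d}$-measure forces $\dimH\ge d$, so both sets have dimension at least $2/\tau$.

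Since $\{\mu=\tau\}=C_\tau\sqcup E_\tau$ contains each piece, its dimension is also at least $2/\tau$, and the three equalities follow. The one point to check is the identification of $\mathcal H^{h_0}$ with $\mathcal H^{2/\tau}$, and that is immediate from the definition at small scales. Nothing here is a real obstacle; the substantive work sits in Theorems~\ref{thm:gaugeD} and~\ref{thm:gaugefibre}.
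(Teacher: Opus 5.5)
Your proposal is correct and follows the paper's proof exactly. The lower bound $\dimH\ge 2/\tau$ comes from $\mathcal H^{2/\tau}(C_\tau)=\mathcal H^{2/\tau}(E_\tau)=\infty$, which is Theorem~\ref{thm:gaugefibre} at $\kappa=0$. The upper bound comes from the inclusion in $\{\mu\ge\tau\}$ together with \eqref{eq:JB}, and $\{\mu=\tau\}=C_\tau\cup E_\tau$ handles the fibre. Your remark that $h_0(r)=r^{2/\tau}$ at small scales, so that $\mathcal H^{h_0}=\mathcal H^{2/\tau}$, is the identification the paper uses implicitly.
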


\begin{proof}
By Theorem~\ref{thm:gaugefibre} with $\kappa=0$, both sets have infinite
$\mathcal H^{2/\tau}$-measure, so both have Hausdorff dimension at least $2/\tau$. Both
lie in $\{\mu\ge\tau\}$, so \eqref{eq:JB} bounds their dimensions by $2/\tau$. Finally
$\{\mu=\tau\}=C_\tau\cup E_\tau$.
\end{proof}

\begin{corollary}\label{cor:nodim}
Ordinary Hausdorff dimension does not distinguish convergence from divergence on the
critical fibre: $C_\tau$, $E_\tau$ and $\{\mu=\tau\}$ all have dimension $2/\tau$. So
neither the irrationality exponent of $x$ (Corollary~\ref{cor:noiff}) nor the dimension
of the part of the fibre it lies in carries the information deciding the convergence of
$\Fst(x)$, whereas every $h_\kappa$ with $\kappa<-1$ does.
\end{corollary}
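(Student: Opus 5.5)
The statement to prove is Corollary~\ref{cor:nodim}. It is a direct assembly of results already established, so I would not introduce any new estimate.

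\emph{Step 1 (equal dimensions).} Corollary~\ref{thm:fibre} already gives $\dimH C_\tau=\dimH E_\tau=\dimH\{\mu=\tau\}=2/\tau$. I would check that its hypotheses are those of Corollary~\ref{cor:nodim}, namely $t>1$, $\tau>2$ and $s=t(\tau-1)$. For completeness I would recall how that value arises. Taking $\kappa=0$ in Theorem~\ref{thm:gaugefibre} gives infinite $\mathcal H^{2/\tau}$-measure for both $C_\tau$ and $E_\tau$, which yields the lower bound $2/\tau$. Both sets lie in $\{\mu\ge\tau\}$, so \eqref{eq:JB} gives the matching upper bound. The decomposition $\{\mu=\tau\}=C_\tau\sqcup E_\tau$ together with finite stability of dimension then gives the value for the whole fibre. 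So dimension assigns the same number to the convergent part, the divergent part and their union. That is the first assertion.

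\emph{Step 2 (the exponent does not decide).} Corollary~\ref{cor:noiff}, through the points $x_2$ and $x_1$ of Proposition~\ref{prop:sharp}, supplies two numbers with $\mu=\tau$ but opposite convergence behaviour. Since both lie in the same fibre, and by Step~1 its two parts share the same dimension, neither $\mu(x)$ nor the dimension of the part containing $x$ can determine whether $\Fst(x)$ converges.

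\emph{Step 3 (the gauge does decide).} This is the positive half of the statement. I would read it as: for $\kappa<-1$, $\mathcal H^{h_\kappa}$ assigns $0$ to $E_\tau$ and $\infty$ to $C_\tau$. That is exactly Theorem~\ref{thm:gaugefibre}, so the gauge distinguishes the two parts as sets.

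There is no real obstacle. The only care needed is interpretive. ``Carries the information'' must be read at the level of the two parts of the fibre, not of individual points. No measure statement can decide membership of a single point, as the remark after Theorem~\ref{thm:dim} already stresses, so I would phrase the conclusion as a separation of $C_\tau$ from $E_\tau$.
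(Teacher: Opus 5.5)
Your proposal is correct and assembles the corollary the same way the paper does: the dimension claim is Corollary~\ref{thm:fibre} (Theorem~\ref{thm:gaugefibre} at $\kappa=0$ for the lower bound, \eqref{eq:JB} for the upper bound), the failure of $\mu$ comes from $x_1,x_2$ via Corollary~\ref{cor:noiff}, and the separation for $\kappa<-1$ is Theorem~\ref{thm:gaugefibre}. Your reading of ``carries the information'' as a separation of the sets $C_\tau$ and $E_\tau$, not a decision for individual points, matches the paper's own phrasing.
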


\begin{remark}
The divergent half of Theorem~\ref{thm:gaugeD} uses only the left inequality of
\eqref{eq:red}, so $\mathcal H^{h_\kappa}(D_{s,t})=\infty$ for $\kappa\ge-1$ holds for
every $t>0$. Everything else in this section inherits the restriction $t>1$ from the
upper bound of Proposition~\ref{prop:red}; cf.\ Remark~\ref{rem:meiburg}.
\end{remark}

\begin{remark}
The asymmetry has the same source as the null-ness of $D_{s,t}$ in
Theorem~\ref{thm:dim}, and Lemma~\ref{lem:Dcover} says how much divergence costs: for
every $\varepsilon>0$ it forces infinitely many reduced approximations with
\[
  \Bigl|x-\frac pq\Bigr|\;<\;\Lambda_\tau^{\,a}\,q^{-\tau}(\log\log q)^{a},
  \qquad a=\frac{1+\varepsilon}{t},
\]
and that inclusion is what gives the zero-measure conclusion for $\kappa<-1$. This is
much weaker than requiring $\rho_k$ to stay bounded: the member $x_1$ of
Proposition~\ref{prop:sharp} diverges although
$\rho_k(x_1)=\frac{q_k^{\tau-1}}{q_{k+1}}\bigl(q_{k+1}\normx{q_kx_1}\bigr)
\sim(k+1)^{1/t}\to\infty$. Convergence, by contrast, only requires the approximations to
be eventually a little worse, which costs nothing. Theorem~\ref{thm:gaugefibre} makes the informal statement
``convergence is the generic behaviour'' quantitative inside the critical fibre, where
Corollary~\ref{thm:fibre} shows that dimension alone cannot see it.
\end{remark}

\begin{remark}
The exponent $2a/\tau$ carried by the factor $(\log\log q)^{2a/\tau}$ in the proof of
Theorem~\ref{thm:gaugeD} is irrelevant to the convergence of the series, which is why
the threshold comes out at $\kappa=-1$ for every $t>1$ and every $\tau>2$: the value of
$t$ influences only how fast the $\log\log$ factor grows. This is what the doubly
exponential growth \eqref{eq:jump} along $I$ buys; using only the Fibonacci bound
$q_k\ge\varphi^{k-1}$, which is what one gets without discarding the indices outside
$I$, would replace $\log\log q_k$ by $\log q_k$ and leave a window
$-1-\frac{2}{t\tau}\le\kappa<-1$ undecided.
\end{remark}

\section{Numerical observations on \texorpdfstring{$\pi$}{pi}}\label{sec:num}

This section reports computations over a finite range; Remark~\ref{rem:finite} below
delimits what they can mean. All values were obtained in exact rational or
high-precision ($200$-digit) arithmetic. Let $Q_k$ denote the convergent denominators
of $x=1/\pi$; these are the numerators of the convergents of $\pi$,
\[
  Q_0,Q_1,\dots \;=\; 1,\,3,\,22,\,333,\,355,\,103993,\,104348,\,208341,\,312689,\dots
\]
and by \eqref{eq:cf1} the terms of $S$ at $n=Q_k$ should be close to
$Q_{k+1}^{2}/(\pi^{2}Q_k^{3})$. Table~\ref{tab:spikes} confirms this, with agreement
improving as $a_{k+1}$ grows; the constant in \eqref{eq:cf1} degrades by at most
$(1+Q_k/Q_{k+1})^{2}\le4$ when $a_{k+1}=1$.

\begin{table}[htbp]
\centering
\small
\begin{tabular}{rrrrrr}
\toprule
$k$ & $Q_k$ & $a_{k+1}$ & $1/(Q_k^{3}\sin^{2}Q_k)$ & $Q_{k+1}^{2}/(\pi^{2}Q_k^{3})$ & ratio\\
\midrule
$0$ & $1$      & $3$   & $1.4122829$              & $0.91189065$             & $1.5487$\\
$1$ & $3$      & $7$   & $1.8597692$              & $1.8162760$              & $1.0239$\\
$2$ & $22$     & $15$  & $1.1987177$              & $1.0551657$              & $1.1360$\\
$3$ & $333$    & $1$   & $3.4802889\cdot10^{-4}$  & $3.4579942\cdot10^{-4}$  & $1.0064$\\
$4$ & $355$    & $292$ & $24.598181$              & $24.491953$              & $1.0043$\\
$5$ & $103993$ & $1$   & $2.4298972\cdot10^{-6}$  & $9.8097105\cdot10^{-7}$  & $2.4770$\\
$6$ & $104348$ & $1$   & $7.2539945\cdot10^{-6}$  & $3.8707699\cdot10^{-6}$  & $1.8740$\\
$7$ & $208341$ & $1$   & $1.6794679\cdot10^{-6}$  & $1.0954719\cdot10^{-6}$  & $1.5331$\\
$8$ & $312689$ & $2$   & $3.8873777\cdot10^{-6}$  & $2.3035723\cdot10^{-6}$  & $1.6875$\\
\bottomrule
\end{tabular}
\caption{Terms of the Flint Hills series at $n=Q_k$ against the prediction of
Proposition~\ref{prop:red}.}
\label{tab:spikes}
\end{table}

The cluster effect of Remark~\ref{rem:cluster} is equally accurate over the finite
range in which \eqref{eq:clusterF} applies.

\begin{table}[htbp]
\centering
\small
\begin{tabular}{rll}
\toprule
$Q_k$ & $\sum_{m=1}^{59}\bigl((mQ_k)^{3}\sin^{2}(mQ_k)\bigr)^{-1}$
& prediction $\zeta(5)/(Q_k^{3}\sin^{2}Q_k)$ \\
\midrule
$22$  & $1.2429888$  & $1.2429837$ \\
$355$ & $25.5065363$ & $25.5065368$ \\
\bottomrule
\end{tabular}
\caption{The multiples of a convergent denominator contribute a factor close to
$\zeta(5)=1.0369278\ldots$ over the range $m\le59$.}
\label{tab:cluster}
\end{table}

Numerically $\sum_{n\le10^{7}}(n^{3}\sin^{2}n)^{-1}=30.3145459\ldots$, and the four
terms $n=1,3,22,355$ already account for $29.0690$ of it. Nothing later in that range is
comparable: $n=103993$ has $\abs{\sin n}\approx1.9\cdot10^{-5}$, but
$n^{3}\approx1.1\cdot10^{15}$ overwhelms it.

Table~\ref{tab:cf} summarizes the first $8000$ partial quotients of $\pi$, computed from
an $11000$-digit rational approximation; recomputing from a $12500$-digit approximation
reproduces all $8101$ computed partial quotients, so the range used is stable. Here
$q_k$ denotes the convergent denominators of $\pi$ itself, not of $x=1/\pi$. The two
indexings are related by a shift: $1/\pi=[0;3,7,15,\dots]$ is the shift of
$\pi=[3;7,15,\dots]$, so numerators and denominators are interchanged with an index
displacement,
\[
  Q_{k+1}=p_k(\pi),\qquad P_{k+1}=q_k(\pi)\qquad(k\ge0),\qquad Q_0=1 ,
\]
and $p_k(\pi)\asymp\pi q_k(\pi)$. Since $\mu(1/\pi)=\mu(\pi)$
(Lemma~\ref{lem:inv}) and the partial quotients are literally the same sequence, the
exponents below are the same in either indexing. Note also that in \eqref{eq:mu} the
partial quotient $a_{k+1}$ is weighed against $q_k$, not against $q_{k+1}$.

\begin{table}[htbp]
\centering
\footnotesize
\begin{tabular}{@{}lll@{}}
\toprule
quantity & value for $\pi$ & almost every $x$ \\
\midrule
largest partial quotient, $1\le k\le8000$ & $a_{431}=20776$ & --- \\
\quad the denominator it is weighed against & $\log_{10}q_{430}=215.7820$ & --- \\
\quad its own contribution to \eqref{eq:mu} & $2+\log a_{431}/\log q_{430}=2.0200$ & --- \\
proportion of $a_k=1$, $1\le k\le8000$ & $0.420750$ & $\log_2(4/3)=0.415037$ \\
$q_{8000}^{1/8000}$ & $3.2449259$ & $e^{\pi^{2}/(12\log2)}=3.2758229$ \\
$\max_{1\le k\le7999}\bigl(2+\frac{\log a_{k+1}}{\log q_k}\bigr)$
   & $3.3916625$ \ ($k=1$; $q_1=7$, $a_2=15$) & $\limsup=2$ \\
\quad the same for $2\le k\le7999$ & $3.2008225$ \ ($k=3$; $q_3=113$, $a_4=292$) & \\
\quad the same for $100\le k\le7999$ & $2.0270886$ \ ($k=117$) & \\
$\max_{100\le k\le7999}\bigl(\log a_{k+1}\big/\tfrac12\log q_k\bigr)$
   & $0.0541772$ & $\limsup=0$ \\
\bottomrule
\end{tabular}
\caption{Continued fraction statistics of $\pi$. The third column gives the
almost-everywhere values: the Gauss--Kuzmin law for the proportion of $1$'s, L\'evy's
theorem for $q_k^{1/k}$, and $\mu=2$ for the last three rows. A maximum taken over a
range with a fixed lower endpoint is non-decreasing in the upper endpoint, so these are
finite-range observations, not approximations to a limit.}
\label{tab:cf}
\end{table}

\paragraph{The correct necessary condition.}
It is tempting to read the last row of Table~\ref{tab:cf} as measuring a necessary
condition $a_{k+1}\gtrsim q_k^{1/2}$ for the divergence of $S$. No such condition
follows: that reading confuses divergence caused by terms not tending to $0$ with
divergence by accumulation of small positive terms. What Proposition~\ref{prop:red} gives
is
\begin{equation}\label{eq:nec}
  \mathcal F_{3,2}(x)=\infty
  \quad\Longrightarrow\quad
  \limsup_{k\to\infty}\frac{\log a_{k+1}}{\tfrac12\log q_k}\ \ge\ 1,
\end{equation}
equivalently: for every $\varepsilon>0$ one has $a_{k+1}>q_k^{1/2-\varepsilon}$ for
infinitely many $k$. Indeed, if $a_{k+1}\le q_k^{(1-\delta)/2}$ for all large $k$ and
some $\delta>0$, then $q_{k+1}\le2a_{k+1}q_k\le2q_k^{(3-\delta)/2}$, so
$q_{k+1}^{2}/q_k^{3}\le4q_k^{-\delta}$, which is summable by \eqref{eq:cf3}. The
condition \eqref{eq:nec} permits the ratio to approach $1$ from below: the member
$x_1$ of Proposition~\ref{prop:sharp} (with $t=2$, $\beta=3/2$, $r=1$, i.e.\
$a_{k+1}=\bigl\lceil\sqrt{q_k/(k+1)}\bigr\rceil$) satisfies
\[
  \frac{q_{k+1}^{2}}{q_k^{3}}\sim\frac{1}{k+1},\qquad
  \frac{a_{k+1}}{\sqrt{q_k}}\sim\frac{1}{\sqrt{k+1}}\longrightarrow0,
\]
and diverges. The last row of Table~\ref{tab:cf} should therefore be read as a
measurement of the ratio in \eqref{eq:nec}, whose limit superior would have to reach
$1$; over the computed range it stays below $0.055$ for $k\ge100$. The record partial
quotient $a_{431}=20776$ is no exception: it is to be compared with $q_{430}$, and
$q_{430}^{1/2}\approx10^{107.9}$, so the ratio in \eqref{eq:nec} equals $0.0400$ there.

\begin{remark}\label{rem:finite}
No finite portion of the continued fraction of $\pi$ can bear on the convergence of
$S$. Given any finite string of partial quotients, the tail can be continued so as to
make the resulting number converge (Proposition~\ref{prop:sharp} with $r=2$, after the
prescribed initial string) or diverge (any $r\le1$). The content of
Section~\ref{sec:num} is therefore a verification of Proposition~\ref{prop:red} and of
the classical statistics of $\pi$'s expansion over the computed range, and an
observation that no anomaly appears there --- not evidence about the limit superior in
\eqref{eq:nec}.
\end{remark}

\section{Discussion}\label{sec:disc}

\emph{What is decidable now.} For $x=1/\pi$, every member of the family outside the
window $t<s\le t(M-1)$, where $M=7.103205334137\ldots$ is the current upper bound for
$\mu(\pi)$ --- for $t=2$, everything outside $2<s\le12.20642$ ---
together with the dimension of the divergence set (Theorem~\ref{thm:dim}), the structure
of the critical case (Proposition~\ref{prop:sharp}, Corollary~\ref{cor:noiff}), the
dimensions of both parts of the critical fibre (Corollary~\ref{thm:fibre}) and their
$h_\kappa$-measures for every $\kappa$ (Theorems~\ref{thm:gaugeD} and
\ref{thm:gaugefibre}). Each of these follows from Proposition~\ref{prop:red} combined
with classical Diophantine approximation.

\emph{What is not.} The convergence of $S$ itself, which by Corollary~\ref{cor:pi} sits
inside that window and is equivalent to a statement about the single number $\pi$.
The known bounds on $\mu(\pi)$ come from Pad\'e and hypergeometric constructions
\cite{Hata,Salikhov,ZeilbergerZudilin} and have moved from $8.0161$ to $7.1032$ in
thirty years; we are not aware of a variant of these constructions that is expected to
reach $5/2$, though we know of no proof that they cannot. Separately, the metric theory
of Section~\ref{sec:dim} is about the size of exceptional sets, and a size estimate does
not determine whether a prescribed number lies in the set.

\emph{Where there is room.} Three directions, none of which requires an improvement of
$\mu(\pi)$.

\begin{enumerate}
\item \emph{Beyond the gauge scale.} Theorem~\ref{thm:gaugeD} settles the
$h_\kappa$-measure of $D_{s,t}$ for every $\kappa$, so the family \eqref{eq:gauge} is
exhausted. The next scale is the one it cannot resolve: the exact-order function of
$x$ itself. By Lemma~\ref{lem:bridge} the fate of $\Fst(x)$ is decided by the growth of
$\rho_k=q_k^{\tau}\abs{x-p_k/q_k}$, and Theorems~\ref{thm:gaugeD}
and~\ref{thm:gaugefibre} show that $C_\tau$ and $E_\tau$ are both $h_\kappa$-large for
$\kappa\ge-1$. A gauge theory indexed by $\psi$ rather than by a
power-times-logarithm --- a Hausdorff-measure analogue of Theorem~\ref{thm:bugeaud},
which \cite{BakerWard} provides only at the level of dimension --- would be the natural
next invariant (Question~\ref{q:fibre}).

\item \emph{Quantitative metric statements.} The quantity
$\Sigma_{s,t}(x)=\sum_kq_{k+1}^{t}/q_k^{s}$ is a weighted sum along the orbit of the
Gauss map $T(x)=\{1/x\}$, whose transfer operator is quasi-compact with spectral gap
governed by the Gauss--Kuzmin--Wirsing constant $0.3036\ldots$. It should be stressed
that the spectral gap alone does not yield distributional or tail estimates for
$\Sigma_{s,t}$: the summands are unbounded and not integrable against the Gauss measure,
so any such programme must first fix a function space and a weight for which the
relevant observable is admissible. Making this precise --- the law of $\Sigma_{s,t}$,
the tail $\mathbb P\{\Sigma_{s,t}>\lambda\}$, and multifractal refinements of
\eqref{eq:JB} for the level sets of $\limsup\log q_{k+1}/\log q_k$ --- seems to us the
most promising technical direction.

\item \emph{Effectivity.} The operative question for $\pi$ is not the value of
$\mu(\pi)$ but whether an effective form of ``almost every $x$ satisfies
$\Sigma_{3,2}(x)<\infty$'' can be attached to a prescribed constant. Results relating
irrationality exponents to effective Hausdorff dimension \cite{Effectivization}
indicate the shape such a statement would take, though not how to prove one for $\pi$.
\end{enumerate}

\begin{question}\label{q:fibre}
Theorem~\ref{thm:gaugefibre} shows that the gauges \eqref{eq:gauge} separate $C_\tau$
from $E_\tau$ exactly for $\kappa<-1$, and that both are $h_\kappa$-infinite for
$\kappa\ge-1$. Is there a dimension function $h$, not of the
form \eqref{eq:gauge}, for which $\mathcal H^{h}$ distinguishes the sets
$E(\psi)$ of Theorem~\ref{thm:bugeaud} for order functions $\psi$ with the same
exponent --- e.g.\ $q^{-\tau}$ from $q^{-\tau}(\log q)^{\gamma}$? This would be a
Hausdorff-measure refinement of Theorem~\ref{thm:bugeaud}; \cite{BakerWard} computes
dimensions only.
\end{question}

\begin{question}
Proposition~\ref{prop:red} reduces the convergence of $S$ to
$\sum_kq_{k+1}^{2}/q_k^{3}<\infty$. For varying irrational $x$, Corollary~\ref{cor:noiff}
shows that this summability criterion is strictly weaker than $\mu(x)<5/2$; returning to
$x=1/\pi$, is there an approach to the weaker statement that does not
proceed through a bound on $\mu(\pi)$? Note that a bound $a_{k+1}(\pi)\le Cq_k(\pi)^{\theta}$
for all large $k$ would give $\mu(\pi)\le2+\theta$ by \eqref{eq:mu}, so any route
through uniform bounds on the partial quotients is an improvement of $\mu(\pi)$ and is
not a way around it. Conversely, $\mu(\pi)\le M$ yields only
$a_{k+1}\le q_k^{M-2+\varepsilon}$ for large $k$, for each $\varepsilon>0$, and not a
bound valid for all $k$ at the endpoint exponent.
\end{question}

\begin{question}
Does \cite[Conj.~3.1]{Meiburg} hold in the range $0<v\le1<u$? (By
Remark~\ref{rem:meiburg} the conjecture fails for $v<u\le1$, so this is what remains of
it.) By that remark this requires a substitute for Proposition~\ref{prop:red} in which
the dyadic blocks are not dominated by the spikes.
\end{question}

\paragraph{Provenance.} Proposition~\ref{prop:red} is classical in substance; the
explicit two-sided form is stated here for convenience.
Corollary~\ref{cor:crit}(a) is \cite[Thm.~2.5]{Meiburg}; Alekseyev's sufficient
condition \cite[Thm.~5]{Alekseyev} is the weaker $\mu<1+(s-1)/t$, and his
\cite[Cor.~4]{Alekseyev} is the necessary direction, i.e.\ the contrapositive of
Corollary~\ref{cor:crit}(b) at $(s,t)=(3,2)$. Corollary~\ref{cor:pi} is a specialization
of these. Theorem~\ref{thm:dim} is an immediate combination of that criterion with the
Jarn\'ik--Besicovitch theorem; the author is not aware of it being recorded in the Flint
Hills literature, but it should be regarded as folklore rather than new. In
Proposition~\ref{prop:sharp}, for $u>v>1$ the member $x_0$ recovers both nonconvergence
conclusions of \cite[Thm.~3.1]{Meiburg}, while the members with $r>1$ establish
\cite[Conj.~3.1]{Meiburg} in that parameter range; Corollary~\ref{cor:noiff} is a consequence. Lemma~\ref{lem:bridge} puts the
summability criterion into the form used throughout the rest of the note. In
Section~\ref{sec:gauge}, Lemma~\ref{lem:HC} is the classical Hausdorff--Cantelli
argument and Theorem~\ref{thm:jarnik} is Jarn\'ik's theorem in the form given in
\cite[\S1.2]{BeresnevichVelani} and \cite{BDV}; Lemma~\ref{lem:Dcover} and
Theorems~\ref{thm:gaugeD} and~\ref{thm:gaugefibre} appear to be new.
Corollary~\ref{thm:fibre} is immediate from \cite{BugeaudExact} as well
(Remark~\ref{rem:viaexact}) and should be regarded as folklore. Theorem~\ref{thm:bugeaud} is used only in Remark~\ref{rem:viaexact}, which
gives an alternative derivation of Corollary~\ref{thm:fibre}; it is quoted in the
formulation of \cite[\S1.3]{FraserWheeler}, the original \cite{BugeaudExact} not having
been available to the author.

\paragraph{AI assistance.}
\begin{sloppypar}
Claude (Anthropic) was used in drafting and in the numerical
work of Section~\ref{sec:num}. ChatGPT (OpenAI) was used for mathematical review and
suggested the continued-fraction construction \eqref{eq:constr} of
Proposition~\ref{prop:sharp} and the sparse-index argument underlying
Lemma~\ref{lem:Dcover} and Theorems~\ref{thm:gaugeD}
and~\ref{thm:gaugefibre}. The same
review identified the missing reference \cite{Meiburg} and the resulting
misattributions, corrected the necessary condition discussed after Table~\ref{tab:cf},
and removed an unproved dimension claim about the critical fibre. The author is
responsible for the statements, the proofs, the numerical results and the references.
\end{sloppypar}

\end{document}